\def\zz{\mathbb{Z}}
\def\nn{\mathbb{N}}
\def\cp{\mathcal{P}}
\def\cc{\mathbb{C}}
\def\rn{\mathbb{R}^n}
\def\no{\nonumber}
\def\les{\lesssim}
\newtheorem{thm}{Theorem}[section]
\newtheorem{rem}[thm]{Remark}
\newtheorem{lem}[thm]{Lemma}
\newtheorem{defn}[thm]{Definition}
\def\XXint#1#2#3{{
\setbox0=\hbox{$#1{#2#3}{\int}$}
\vcenter{\hbox{$#2#3$}}\kern-.5\wd0}}
\numberwithin{equation}{section}
\begin{document}
\title{ The Commutators of $n$-dimensional Rough Fractional Hardy Operators on Two Weighted Grand Herz-Morrey Spaces with Variable Exponents  \footnotetext{$\ast$ The corresponding author J. S. Xu  jingshixu@126.com\\
The work is supported by the National Natural Science Foundation of China (Grant No. 12161022) and the Science and Technology Project of Guangxi (Guike AD23023002)}}
\author{Shengrong Wang\textsuperscript{a}, Pengfei Guo\textsuperscript{a}, Jingshi Xu\textsuperscript{b,c,d*}\\
{\scriptsize  \textsuperscript{a} School of Mathematics and Statistics, Hainan Normal University, Haikou, 571158, China }\\
{\scriptsize  \textsuperscript{b}School of Mathematics and Computing Science, Guilin University of Electronic Technology, Guilin 541004, China} \\
{\scriptsize  \textsuperscript{c} Center for Applied Mathematics of Guangxi (GUET), Guilin 541004, China}\\
{\scriptsize  \textsuperscript{d}Guangxi Colleges and Universities Key Laboratory of Data Analysis and Computation, Guilin 541004, China}}
\date{}

\maketitle

{\bf Abstract.} In this paper, we  obtain the boundedness of $m$th order commutators generated by the $n$-dimensional fractional Hardy operator with rough kernel and its adjoint operator with BMO functions  on two weighted grand Herz-Morrey spaces with variable exponents.
Replacing  Lipschitz functions with BMO functions the corresponding  result is also given.

 {\bf Key words and phrases.} rough fractional Hardy operator, commutator, BMO, Lipschitz function, Muckenhoupt weight, variable exponent, grand Herz-Morrey space
 
{\bf Mathematics Subject Classification (2020).}42B35, 47B38, 46E30, 47B47

\section{Introduction}
Denote by $L^{1}_{\rm loc}(\rn)$ the set of all complex-valued locally integrable functions on $\rn.$ 
Let $0\leq \beta<n$ and $f \in  L^{1}_{\rm loc}(\rn)$. In \cite{fllw},  Fu, Liu and Lu  defined the $n$-dimensional fractional Hardy operator and its adjoint operator as
\[H_\beta f(x):= \frac{1}{|x|^{n-\beta}}\int_{|t| \leq |x|} f(t){\rm d} t,\ x \in \mathbb{R}^n \backslash \{ 0\},\]
and 
\[H^\ast_\beta f(x):= \frac{1}{|x|^{n-\beta}}\int_{|t| > |x|} f(t){\rm d} t,\ x \in \mathbb{R}^n \backslash \{ 0\}. \]
When $\beta=0$, $H_0$ and $H^\ast_0$ are the $n$-dimensional Hardy operator and its adjoint operator introduced by Christ and Grafakos, respectively. They are bounded on $L^p(\rn)$ for $p \in (1,\infty)$, see \cite{7}. The boundedness of commutators generated by the fractional Hardy operator and its adjoint operator with CMO$(\rn)$ functions on products of Lebesgue spaces and homogeneous Herz spaces was given in \cite{fllw}.

Let $\mathbb{S}^{n-1}$ be the unit sphere in $\rn$ equipped with the Lebesgue measure ${\rm d} \sigma = {\rm d} \sigma(x^{\prime})$. Let  $\Omega$ be a homogeneous function of degree zero and satisfy
\[ \int_{\mathbb{S}^{n-1}} \Omega(x^{\prime}) {\rm d} \sigma(x^{\prime})=0, \]
where $x^{\prime}=x/|x|$ and $x$ is not zero.
In \cite{flz} , Fu, Lu and Zhao  proposed the $n$-dimensional Hardy operator with a rough kernel and its adjoint operator are defined as
\[H_{\Omega,\beta}: = \frac{1}{|x|^{n-\beta}} \int_{|t|\leq |x|} \Omega(x-t) f(t) {\rm d} t, \quad x\in \rn \setminus \{0\}\]
and
\[H^\ast_{\Omega,\beta} :=  \int_{|t| > |x|} \frac{\Omega(x-t) f(t)}{|t|^{n-\beta}} {\rm d} t, \quad x\in \rn \setminus \{0\},\]
where $\Omega \in L^s(\mathbb{S}^{n-1})$, $1 \leq s < \infty$.

Let $b \in L^{1}_{\rm loc}(\rn)$. 
Then the $m$th order commutators generated by the Hardy operator with rough kernel and its adjoint  operator with $b$  are defined as 
\[H^{b,m}_{\Omega,\beta} := \frac{1}{|x|^{n-\beta}} \int_{|t|\leq |x|} (b(x)-b(t))^m \Omega(x-t) f(t) {\rm d} t, \quad x\in \rn \setminus \{0\}\]
and
\[H^{\ast,b,m}_{\Omega,\beta} := \frac{1}{|x|^{n-\beta}} \int_{|t| > |x|} \frac{ (b(x)-b(t))^m \Omega(x-t) f(t)}{|t|^{n-\beta}} {\rm d} t, \quad x\in \rn \setminus \{0\}\]
for suitable functions $f$. Then they obtained the boundedness of $H^{b,m}_{\Omega,\beta}$ and $H^{\ast,b,m}_{\Omega,\beta}$ with $b$ being central BMO functions on Lesbegue and Herz spaces. Furthermore, $\lambda$-central BMO estimates for commutators on central Morrey spaces were also discussed in \cite{flz}.

Let  $0\leq \beta<n$. The fractional integral operator $I_\beta$ is defined as
\[I_\beta (f)(x):= \int_{\rn} \frac{f(y)}{|x-y|^{n-\beta}} {\rm d} y, \ x \in \rn. \]

Izuki and  Noi \cite{in-1} proved that the fractional integral operator from weighted Herz spaces  with variable exponent $\dot{K}^{\alpha,q_1}_{p_1 (\cdot)}(w)$  into $\dot{K}^{\alpha,q_2}_{p_2 (\cdot)}(w)$ for $0<q_1\leq q_2 <\infty$, $0<\beta<n/p^+_1$ and $1/p_2(\cdot)=1/p_1(\cdot)-\beta/n$.
Izuki \cite{im-1} showed the boundedness of fractional integrals from the Herz-Morrey space with variable exponent $M\dot{K}^{\alpha,\lambda}_{q_1,p_1 (\cdot)}(\rn)$ to $M\dot{K}^{\alpha,\lambda}_{q_2,p_2 (\cdot)}(\rn)$ for $0<\lambda<\alpha$, $0<q_1\leq q_2 <\infty$, $0<\beta<n/p^+_1$ and $1/p_2(\cdot)=1/p_1(\cdot)-\beta/n$.
Liu, Zhang and Yao \cite{lzy-1} obtained the boundedness for $m$-th order commutators of the $n$-dimensional fractional Hardy operator and adjoint operator on weighted variable exponent Morrey-Herz space $M\dot{K}^{\alpha(\cdot),\lambda}_{q,p (\cdot)}(w)$.
Niu and Wang \cite{nw-1} gave the boundedness of Hardy-type operator with rough kernels and their commutators on central Morrey space with variable exponents $\dot{\mathcal{B}}^{p(\cdot),\lambda}$.
Many results of the boundedness of these operators can be seen in \cite{im-2,wz-1,in-2,sb1,zs1,sa1,wj1,ahs1,xy1}

Recently, Nafis, Rafeiro and Zaighum \cite{nrz-1,nrz-2,nrz-3} introduced the grand variable Herz space, and obtained the boundedness of some operators on these spaces.
Zhang, He and Zhang \cite{zhz2} introduced weighted grand Herz-Morrey type spaces  and showed the boundedness of sublinear operators and their multilinear commutators on these spaces.
Sultan and Sultan \cite{sbm1} obtained the boundedness of the higher order commutators of the Hardy operators on grand variable Herz-Morrey spaces.
 Chen, Lu and Tao \cite{clt1} established the boundedness of fractional Marcinkiewicz integral  and its higher order commutator  is bounded on grand variable Herz spaces  and grand variable Herz-Morrey spaces.

Motivated by the mentioned works, in this paper we consider the boundedness of the $n$-dimensional rough fractional Hardy operator on two weighted grand Herz-Morrey spaces with variable exponents.
 The plan of the paper is as follows. In Section \ref{wbl-s-1}, we collect some notations.  
In Section \ref{wbl-s-2}, we  give the boundedness of $m$th order commutators generated by the $n$-dimensional fractional Hardy operator with rough kernel and its adjoint operator with BMO function  on two weighted grand Herz- Morrey spaces with variable exponents.
In Section \ref{wbl-s-3}, we show that the boundedness of $m$th order commutators generated by the $n$-dimensional fractional Hardy operator with rough kernel and its adjoint operator with Lipschitz functions  on two weighted grand Herz- Morrey spaces with variable exponents. 

\section{Notations and preliminaries}\label{wbl-s-1}

In this section, we first recall some definitions and notations. Let $\nn$ be the collection of all natural numbers and $\nn_0 = \nn \cup \{0\}$. Let $\zz$ be the collection of all integers.
Let $\rn$ be the $n$-dimensional Euclidean space, where $n \in \nn$. In the sequel, $C$ denotes positive constants, but it may change from line to line. For any quantities $A$ and $B$, if there exists a constant $C>0$ such that $A\leq CB$, we write $A \lesssim B$. If $A\lesssim B$ and $B \lesssim A$, we write $A \sim B$.
For each $k \in \mathbb{Z}$, we define
${B_k}: = \{ x \in \rn:| x | \leq {2^k} \},$ $D_k: = B_k\backslash B_{k - 1}=\{ x\in \rn: 2^{k-1}<|x| \leq 2^k\},$ $\chi _k: = \chi _{D_k},$ $\widetilde{\chi}_m=\chi_{m},$
$m \geq 1,$ $\widetilde{\chi}_0=\chi_{B_0}.$

 For a measurable function $q(\cdot)$ on $\rn$ taking values in $\ (0,\infty)$, we denote ${q^- }: =  {\rm ess}\inf_{x \in \rn} q(x),$ ${q^+ }: =  {\rm ess} \sup_{x \in\rn} q(x).$
The set $\cp(\rn)$ consists of all $q(\cdot)$ satisfying $q^->1$ and $q^+<\infty;$
$\cp_{0}(\rn)$ consists of all $q(\cdot)$ satisfying $q^->0$ and $q^+<\infty$.

Let $q(\cdot) \in \cp_0(\rn)$.
Then the Lebesgue space with variable exponent $L^{q(\cdot)}(\rn)$ is defined by
\[L^{q(\cdot)}(\rn): = \bigg\{f \text{ is measurable on}\ \rn:\ \int_{\rn} \bigg( \frac{|f(x)|}
{\lambda } \bigg)^{q(x)}\, {\rm d} x < \infty\text{ for some }\lambda>0 \bigg\}. \]
The space $L^{q(\cdot)}(\rn)$ is  equipped with the (quasi) norm
\[\| f \|_{L^{q(\cdot)}}: = \inf \bigg\{ \lambda  > 0:\int_{\rn} \bigg( \frac{|f(x)|}{\lambda } \bigg)^{q(x)}{\rm d} x \leq 1  \bigg\}.\]
Denote by $L^\infty(\rn)$ the set of all measurable functions $f$ such that
\[ \|f\|_{L^\infty} := \mathop{{\rm ess}\sup}_{y \in \rn} |f(y)| < \infty.\]
The space $L^{q(\cdot)} _{\rm loc}(\rn)$ is defined by
\[L^{q(\cdot)} _{\rm loc}(\rn):=\{f:f\chi_{K} \in L^{q(\cdot)}(\rn) {\text{ for all compact subsets }} K \subset \rn\},\]
where and what follows, $\chi_{S}$ denotes the characteristic function of a measurable set $S\subset \rn.$

Let $f\in L^{1}_{\rm loc}(\rn)$. Then the standard Hardy-Littlewood maximal function of $f$ is defined by
\[\mathcal{M}f(x) := \sup_{B \ni x} \frac{1}{| B |}\int_B |f(y)|dy , \ \forall x \in \mathbb{R}^n,\]
where the supremum is taken over all balls containing $x$ in $\rn$. 

\begin{defn}\label{cd-1}
Let $\alpha(\cdot)$ be a real-valued measurable function on $\rn$.

{\rm (i)} The function $\alpha(\cdot)$ is locally $\log$-H\"{o}lder continuous if there exists a constant $C_{\log}(\alpha)$ such that
\[|\alpha (x) - \alpha (y)| \leq \frac{C_{\log}(\alpha)}{\log ( e + 1/|x - y|)}, \ x,y \in \mathbb{R}^n,\ |x - y| < \frac{1}{2}.\]
Denote by $C_{\rm loc}^{\log}(\rn)$ the set of all locally log-H\"{o}lder continuous

{\rm (ii)} The function $\alpha(\cdot)$ is $\log$-H\"{o}lder continuous at the origin if there exists a constant $C_0$ such that
\[|\alpha (x) - \alpha (0)| \leq \frac{C_0} {\log ( e + 1/| x | )}, \ \forall x \in \rn.\]
Denote by $C^{\log}_0(\rn)$ the set of all $\log$-H\"{o}lder continuous functions at the origin.

{\rm (iii)} The function $\alpha(\cdot)$ is $\log$-H\"{o}lder continuous at infinity if there exists $\alpha_{\infty}\in \mathbb{R}$ and a constant $C_\infty$ such that
\[|\alpha (x) - \alpha _\infty|\leq \frac{C_\infty} {\log ( e + | x | )}, \ \forall x \in \rn.\]
Denote by $C^{\log}_\infty(\rn)$ the set of all $\log$-H\"{o}lder continuous functions at infinity.

{\rm (iv)} The function $\alpha(\cdot)$ is global $\log$-H\"{o}lder continuous if $\alpha(\cdot)$ are both locally $\log$-H\"{o}lder continuous and $\log$-H\"{o}lder continuous at infinity.
Denote by $C^{\log}(\rn)$ the set of all global $\log$-H\"{o}lder continuous functions.
\end{defn}

Let $q(\cdot) \in \mathcal{P}(\rn)$ and $w$ be  a nonnegative measurable function on $\mathbb{R}^n$.
Then the weighted variable exponent Lebesgue space $L^{q(\cdot)}(w)$ is  the set of all complex-valued measurable functions $f$ such that $fw \in L^{q(\cdot)}$. The space $L^{q(\cdot)}(w)$ is a Banach space equipped with the norm
\[\|f\|_{L^{q(\cdot)}(w)}:=\|fw\|_{L^{q(\cdot)}}.\]

For weights, a important class is the following Muckenhoupt $A_q$ class with constant exponent $q \in [1,\infty]$ which was firstly proposed by Muckenhoupt in \cite{29}.

\begin{defn}\label{vd3}
Fix $q \in (1,\infty)$. A positive measurable function $w$ is said to be in the Muckenhoupt class $A_q$, if 
\[ [W]_{A_q}:=\sup_{\text{all ball} \ B \subset \rn} \bigg(\frac{1}{|B|}\int_B w(x) {\rm d}x \bigg) \bigg(\frac{1}{|B|}\int_B w(x)^{1-q^{\prime}}{\rm d}x \bigg)^{q-1}<\infty.\]
We say $w \in A_1$, if there exists a positive constant $C$ such that $\mathcal{M}w(x) \leq Cw(x)$ for a.e. $x\in \rn$. We denote $A_\infty :=\cup_{p\geq1} A_q$.
\end{defn}

In \cite{bt23}, Cruz-Uribe, Fiorenza and Neugebauer generalized the Muckenhoupt $A_q$ class with constant exponent  to variable exponent.
\begin{defn}\label{dg1}
Let $q(\cdot) \in \mathcal{P}(\rn)$, a nonnegative measurable function $w$ is said to be in $A_{q(\cdot)}$,
if 
\[ \|w\|_{A_{q(\cdot)}}:= \sup_{\text{all ball } B \subset \rn} \frac{1}{|B|}\|w\chi_B\|_{L^{q(\cdot)}} \|w^{-1}\chi_B\|_{L^{q^{\prime}(\cdot)}} < \infty.\]
\end{defn}

\begin{defn}\label{dg2}
Let $q(\cdot) \in \mathcal{P}(\mathbb{R}^n)$, a nonnegative measurable function $w$ is said to be in $\tilde{A}_{q(\cdot)}$,
if 
\[\|w\|_{\tilde{A}_{q(\cdot)}}= \sup_{\text{all ball } B \subset \rn} \frac{1}{|B|}\|w^{1/q(\cdot)}\chi_B\|_{L^{q(\cdot)}} \|w^{-1/q(\cdot)}\chi_B\|_{L^{q^{\prime}(\cdot)}}  \leq \infty.\]
\end{defn}

To describe the boundedness of the fractional integral on the weighted function space, Bernardis Dalmasso and  Pradolini \cite{bdp1} introduced a class weight $A_{p(\cdot),q(\cdot)}$ related to it.
\begin{defn}
Let $p(\cdot)$, $q(\cdot)\in \cp(\rn)$ and $0<\beta<n$ such that $1/p(x)-1/q(x)=\beta/n$. We say that $w \in A_{p(\cdot),q(\cdot)}$ if there exist a positive constant $C$ for all ball $B$ in $ \rn$ such that
\[ \|w\chi_B\|_{L^{q(\cdot)}} \|w^{-1}\chi_B\|_{L^{p^{\prime}(\cdot)}} \leq C|B|^{1-\frac{\beta}{n}}. \]
\end{defn}

Sultan et al. \cite{sbm2} introduced the weighted grand Herz space $\dot{K}_{q(\cdot)}^{\alpha,p),\theta}(w)$ and they obtained the boundedness of fractional integrals.
In \cite{in1}, Izuki and  Noi introduced two weighted Herz spaces $\dot{K}_{p(\cdot)}^{\alpha (\cdot),q(\cdot)}(w_1,w_2)$ with variable exponents.  Based on the above, we consider the two weight grand Herz-Morrey space with variable exponents. 

\begin{defn}  Let $v$, $w$ be weights on $\rn$, $q(\cdot)\in\cp(\rn),$ $p\in (1,\infty)$, $\lambda \in [0, \infty)$, $\alpha(\cdot) \in L^\infty(\rn)$ and $\theta\in (0,\infty)$.
  The homogeneous two weight grand Herz-Morrey space $M\dot{K}_{q(\cdot),\lambda}^{\alpha (\cdot),p),\theta}(v,w)$ is defined  by
\[M\dot{K}_{q(\cdot),\lambda}^{\alpha (\cdot),p),\theta}(v,w):= \Big\{f\in L^{p(\cdot)}_{\rm loc}(\rn \setminus \{0\},w):\|f\|_{M\dot{K}_{q(\cdot),\lambda}^{\alpha (\cdot),p),\theta}(v,w)}<\infty\Big\},\]
where
\[\|f\|_{M\dot{K}^{\alpha(\cdot),p),\theta}_{q(\cdot),\lambda}(v,w)}:= \sup_{\delta>0} \sup_{k_0\in \zz}2^{-k_0\lambda} \bigg( \delta^\theta \sum_{k=-\infty}^{k_0} \big\| v(B_k)^{\alpha(\cdot)/n} f\chi_k \big\|_{L^{q(\cdot)}(w)}^{p(1+\delta)} \bigg)^{\frac{1}{p(1+\delta)}}. \]
\end{defn}
The following lemma   is well known, for example, see \cite{ta1,dj1,gj1}.
\begin{lem}\label{tL3}
Let $w$ be a weight on $\rn$. If $q \in [1,\infty)$ and $w \in A_q$, then there exist constants $\epsilon\in (0,1)$ and $C>0$ such that for all balls $B$ in $\rn$ and all measurable subsets $S\subset B,$
\begin{equation*}
\frac{w(B)}{w(S)} \leq C \bigg( \frac{|B|}{|S|} \bigg)^q,
\end{equation*}
\begin{equation*}
\frac{w(S)}{w(B)} \leq C \bigg( \frac{|S|}{|B|} \bigg)^\epsilon.
\end{equation*}
\end{lem}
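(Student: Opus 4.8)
The plan is to treat the two inequalities separately: the first follows directly from H\"older's inequality together with the definition of the $A_q$ class, while the second is a manifestation of the self-improving (reverse H\"older) property of Muckenhoupt weights.

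For the first inequality I would start from any measurable $S \subset B$ and write $|S| = \int_S w^{1/q}\,w^{-1/q}\,{\rm d}x$. Applying H\"older's inequality with exponents $q$ and $q'$, and using the identity $-q'/q = 1-q'$, gives
\[ |S| \leq \Big(\int_S w\Big)^{1/q}\Big(\int_S w^{1-q'}\Big)^{1/q'} \leq w(S)^{1/q}\Big(\int_B w^{1-q'}\Big)^{1/q'}, \]
where the last step only uses $S \subset B$. The $A_q$ condition rearranges into $\big(\int_B w^{1-q'}\big)^{q-1} \leq [w]_{A_q}\,|B|^q / w(B)$, and substituting this bound, raised to the power $1/q$ and noting $1/q' = (q-1)/q$, yields $\frac{|S|}{|B|} \leq [w]_{A_q}^{1/q}\big(\frac{w(S)}{w(B)}\big)^{1/q}$. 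Raising both sides to the $q$th power and rearranging produces the first claimed inequality with $C = [w]_{A_q}$.

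For the second inequality the key input is the reverse H\"older inequality: since $w \in A_q \subset A_\infty$, there exist $r > 1$ and $C > 0$ with $\big(\frac{1}{|B|}\int_B w^r\big)^{1/r} \leq \frac{C}{|B|}\int_B w$ for every ball $B$. Given $S \subset B$, H\"older's inequality with exponents $r$ and $r'$ gives $w(S) \leq \big(\int_B w^r\big)^{1/r}|S|^{1/r'}$, and inserting the reverse H\"older bound $\big(\int_B w^r\big)^{1/r} \leq C\,|B|^{1/r-1} w(B)$ produces $w(S) \leq C\,w(B)\,(|S|/|B|)^{1-1/r}$. Setting $\epsilon = 1 - 1/r \in (0,1)$ finishes the proof.

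The main obstacle is the reverse H\"older inequality itself, which encodes the nontrivial fact that an $A_q$ weight automatically belongs to a better exponent class; its standard proof proceeds through a Calder\'on--Zygmund decomposition and a good-$\lambda$ / distributional argument, and it is precisely this self-improvement that forces $\epsilon$ to be some small positive number rather than a quantity computable from $q$ alone. Since the statement is classical, in practice I would invoke the references already listed (\cite{ta1,dj1,gj1}) for the reverse H\"older step rather than reproduce that machinery in full.
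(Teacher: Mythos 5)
Your proposal is correct, but it cannot be compared against a proof in the paper because the paper gives none: Lemma \ref{tL3} is stated as ``well known'' and simply referred to the standard texts \cite{ta1,dj1,gj1}. What you have written is essentially the textbook argument that those references contain, so you have filled in what the authors chose to cite. Both halves of your argument check out: the H\"older-plus-duality computation (using $-q'/q = 1-q'$ and $(q-1)q' = q$) yields the first inequality with $C = [w]_{A_q}$, and the reverse H\"older inequality with exponent $r>1$ yields the second with $\epsilon = 1 - 1/r$; invoking reverse H\"older as a black box is reasonable here, since it is exactly the nontrivial self-improvement step and the lemma itself is classical. One small refinement you should make: the lemma allows $q = 1$, and there your H\"older argument with exponents $q$ and $q'$ degenerates, because the expression $\int_B w^{1-q'}$ and the displayed $A_q$ condition only make sense for $q \in (1,\infty)$ (the paper's Definition \ref{vd3} defines $A_1$ separately via $\mathcal{M}w \leq Cw$). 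The fix is immediate: for $q=1$ and a.e.\ $x \in S$ one has $\frac{1}{|B|}\int_B w \leq \mathcal{M}w(x) \leq Cw(x)$, and averaging over $S$ gives $\frac{w(B)}{|B|} \leq C\,\frac{w(S)}{|S|}$, which is the first inequality with exponent $1$. With that case added, your proof is complete.
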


The following lemma  has been proved by Noi and Izuki in \cite{in-1}.
\begin{lem}\label{tt-L5}
If $q(\cdot)\in C^{\log}(\rn)\cap \mathcal{P}(\rn)$ and $w \in A_{q(\cdot)}$,
then there exist constants $\delta_{1}$, $\delta_{2}\in (0,1)$ and $C>0$ such that for all balls $B$ in $\rn$ and all measurable subsets $S\subset B,$
\begin{equation*}
\frac{\|\chi _S\|_{L^{q( \cdot )}(w)}}{\| \chi _B \|_{L^{q( \cdot )}(w)}} \leq C\bigg( \frac{| S |}{| B |} \bigg)^{\delta _1},
\end{equation*}
\begin{equation*}
\frac{\|\chi _S\|_{L^{q^{\prime}( \cdot )}(w^{-1})}}{\|\chi _B \|_{L^{q^{\prime}( \cdot )}(w^{-1})}} \leq C\bigg( \frac{| S |}{| B |} \bigg)^{\delta _2}.
\end{equation*}
\end{lem}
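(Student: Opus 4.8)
The plan is to prove only the first inequality and to deduce the second from it by duality. Indeed, if $q(\cdot)\in C^{\log}(\rn)\cap\mathcal{P}(\rn)$ then so is $q'(\cdot)$, since $1<q^-\le q^+<\infty$ forces $1<(q')^-\le(q')^+<\infty$ and the log-H\"older modulus is preserved under passing to the conjugate exponent. Moreover, comparing Definition \ref{dg1} for the pair $(q(\cdot),w)$ with the pair $(q'(\cdot),w^{-1})$ and using $(q')'=q$ shows $\|w\|_{A_{q(\cdot)}}=\|w^{-1}\|_{A_{q'(\cdot)}}$, so that $w\in A_{q(\cdot)}$ is equivalent to $w^{-1}\in A_{q'(\cdot)}$. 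Hence the second estimate is precisely the first one applied to $(q'(\cdot),w^{-1})$, and it suffices to control $\|\chi_S\|_{L^{q(\cdot)}(w)}/\|\chi_B\|_{L^{q(\cdot)}(w)}$.

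It is worth first recording why the claim holds in the constant-exponent case $q(\cdot)\equiv q$, since this isolates the mechanism. With the convention $\|f\|_{L^{q}(w)}=\|fw\|_{L^{q}}$, the condition of Definition \ref{dg1} becomes $\frac{1}{|B|}(\int_B w^{q})^{1/q}(\int_B w^{-q'})^{1/q'}\lesssim 1$, which is exactly $w^{q}\in A_{q}$ in the classical Muckenhoupt sense, while $\|\chi_S\|_{L^{q}(w)}=w^{q}(S)^{1/q}$. Applying the reverse-doubling bound of Lemma \ref{tL3} to the classical weight $w^{q}$ gives $w^{q}(S)/w^{q}(B)\le C(|S|/|B|)^{\epsilon}$, and taking $q$-th roots yields the first inequality with $\delta_1=\epsilon/q\in(0,1)$. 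Thus the engine is the $A_\infty$ self-improvement (reverse doubling), and the only difficulty for variable $q(\cdot)$ is that one can no longer factor out a single power $q$.

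For genuinely variable $q(\cdot)$ I would proceed as follows. First I would set up the generalized H\"older inequality in $L^{q(\cdot)}$ and the resulting norm–conjugate relation $\|w\chi_B\|_{L^{q(\cdot)}}\,\|w^{-1}\chi_B\|_{L^{q'(\cdot)}}\sim|B|$; here the lower bound $|B|\lesssim$ (product) comes from the generalized H\"older inequality applied to $\int_B w\,w^{-1}=|B|$, and the upper bound (product) $\lesssim|B|$ is the $A_{q(\cdot)}$ condition of Definition \ref{dg1}. Second, I would record that $w\in A_{q(\cdot)}$ together with $q(\cdot)\in C^{\log}(\rn)\cap\mathcal{P}(\rn)$ makes $\mathcal{M}$ bounded on $L^{q(\cdot)}(w)$ and on $L^{q'(\cdot)}(w^{-1})$. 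Since $S\subset B$ gives $\mathcal{M}\chi_S\ge(|S|/|B|)\chi_B$, combining these maximal bounds with the conjugate relation already controls $\|\chi_S\|_{L^{q(\cdot)}(w)}/\|\chi_B\|_{L^{q(\cdot)}(w)}$ by a constant.

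The main obstacle is upgrading this constant bound to one with a strictly positive power $\delta_1$: the maximal-function argument alone yields only the exponent $0$. The gain must come from the self-improvement (openness, or reverse H\"older) of the class $A_{q(\cdot)}$, the variable-exponent analogue of the reverse-doubling used above in Lemma \ref{tL3}. Concretely, I would invoke the existence of some $\eta>0$ for which the $A_{q(\cdot)}$ condition persists under a small perturbation of the weight, supplying a little extra integrability of $w$ on every ball; splitting $\chi_S$ by H\"older's inequality against this extra integrability then produces the factor $(|S|/|B|)^{\delta_1}$ with $\delta_1\in(0,1)$ depending on $\eta$, $q^-$ and the log-H\"older constants. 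Establishing (or citing) this reverse-H\"older self-improvement for $A_{q(\cdot)}$ is the crux; once it is available, the remaining steps are routine bookkeeping with the generalized H\"older inequality.
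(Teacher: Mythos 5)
Your two preliminary reductions are correct: Definition \ref{dg1} is symmetric under the involution $(q(\cdot),w)\mapsto(q'(\cdot),w^{-1})$ (so $w\in A_{q(\cdot)}$ iff $w^{-1}\in A_{q'(\cdot)}$, and the second inequality is the first for the conjugate pair), and your constant-exponent computation, identifying Definition \ref{dg1} with the classical condition $w^{q}\in A_{q}$ and invoking Lemma \ref{tL3} to get $\delta_1=\epsilon/q$, is also correct. The problem is that for genuinely variable $q(\cdot)$ --- the only case the lemma is about --- your argument stops exactly where the proof has to begin. As you yourself concede, the generalized H\"older/norm-conjugate relation and the boundedness of $\mathcal{M}$ on $L^{q(\cdot)}(w)$ yield only the exponent $0$ (and the bound $\|\chi_S\|_{L^{q(\cdot)}(w)}\leq\|\chi_B\|_{L^{q(\cdot)}(w)}$ is trivial by monotonicity anyway); the entire content of the lemma is the strictly positive power $\delta_1$. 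You propose to extract that power from a ``reverse H\"older self-improvement of $A_{q(\cdot)}$'' which you neither prove nor precisely cite. That is not a deferred technicality: it is the theorem. In the known treatments one must show (as in Cruz-Uribe--Fiorenza--Neugebauer \cite{bt23} and Cruz-Uribe--Wang \cite{cw-1}) that $w\in A_{q(\cdot)}$ with $q(\cdot)\in C^{\log}(\rn)$ forces the classical condition $w^{q(\cdot)}\in A_\infty$, hence a reverse H\"older inequality for $w^{q(\cdot)}$, and then convert between the Luxemburg norm $\|\chi_E\|_{L^{q(\cdot)}(w)}$ and the modular $\int_E w(x)^{q(x)}\,{\rm d}x$ with \emph{uniform} exponents for an arbitrary measurable $S\subset B$; this conversion is where log-H\"older continuity (Lemma \ref{tt-L4}-type estimates) is really consumed, and it is not the ``routine bookkeeping'' your last sentence suggests. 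Without these two steps carried out, the proposal is a plan rather than a proof.

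For comparison: the paper itself does not prove Lemma \ref{tt-L5} at all; it is quoted as a known result of Izuki and Noi \cite{in-1}. So if you complete your plan by ``citing'' the self-improvement, it collapses in substance to the same citation the paper makes, since that cited machinery is precisely what you are missing. The one genuine addition in your write-up worth keeping is the duality observation that only one of the two displayed inequalities needs to be proved, the other following from the symmetry of Definition \ref{dg1}; neither the paper's statement nor its surrounding remark exploits this.
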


\begin{rem}
 If $w^{q(\cdot)} \in \tilde{A}_{q(\cdot)}$, then $w^{-q^{\prime}(\cdot)} \in \tilde{A}_{p^{\prime}(\cdot)}$  by Definitions \ref{dg2}. Then if  $w^{q(\cdot)} \in \tilde{A}_{q(\cdot)}$, we have $w^{q(\cdot)} \in A_{q(\cdot)}$ and $w^{-q^{\prime}(\cdot)} \in A_{q^{\prime}(\cdot)}$ by Definitions \ref{dg1}. 
 \end{rem}

\begin{lem}[see {\cite[Corollary 4.5.9]{dhhr-0}}]\label{tt-L4}
Let $q(\cdot) \in \mathcal{P}(\rn)\cap C^{\log}(\rn)$. Then
\begin{equation*}
\|\chi_Q\|_{L^{q(\cdot)}} \sim
\begin{cases}
|Q|^{1/q(x)} & \text{if } |Q| \leq 2^n \text{ and } x \in Q,  \\
|Q|^{1/q_\infty} & \text{if } |Q| \geq 1,
\end{cases}
\end{equation*}
for every cube (or ball) $Q \subset \rn$, where $q_\infty = \lim_{x\rightarrow \infty}q(x)$.
\end{lem}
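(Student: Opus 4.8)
The plan is to reduce the norm estimate to a modular computation and then treat the two size regimes with the two halves of the log-Hölder hypothesis. Since $q^+<\infty$, the Luxemburg norm of a characteristic function is the unique $\lambda>0$ for which the modular equals $1$; thus $\|\chi_Q\|_{L^{q(\cdot)}}$ is characterized by
\[
\int_Q \lambda^{-q(x)}\,{\rm d}x = 1 .
\]
It therefore suffices to insert the conjectured value of $\lambda$ and show that the resulting integral is bounded above and below by constants depending only on $n$ and the log-Hölder data of $q(\cdot)$, uniformly in $Q$.

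For a cube (or ball) with $|Q|\le 2^n$, I would fix a point $x_0\in Q$, set $\lambda_0=|Q|^{1/q(x_0)}$, and write the integrand as $\lambda_0^{-q(x)}=|Q|^{-1}\,|Q|^{(q(x_0)-q(x))/q(x_0)}$. The local log-Hölder continuity of Definition \ref{cd-1}(i) gives $|Q|^{q(x)-q(x_0)}\sim 1$ uniformly for $x\in Q$: taking logarithms, $|q(x)-q(x_0)|\,\big|\log|Q|\big|$ stays bounded because $|x-x_0|\lesssim|Q|^{1/n}$ forces $\log(e+1/|x-x_0|)\gtrsim -\tfrac1n\log|Q|$, which cancels the large factor $\big|\log|Q|\big|$. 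Hence $\int_Q\lambda_0^{-q(x)}\,{\rm d}x\sim|Q|^{-1}\cdot|Q|=1$, and the first case follows.

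The regime $|Q|\ge 1$ is the delicate one and is where I expect the main obstacle. Taking $\lambda_0=|Q|^{1/q_\infty}$, I must show $\int_Q|Q|^{-q(x)/q_\infty}\,{\rm d}x\sim 1$. The decay condition of Definition \ref{cd-1}(iii) yields $|q(x)-q_\infty|\le C_\infty/\log(e+|x|)$, so $q(x)$ is close to $q_\infty$ only where $|x|$ is large; near the origin it may differ from $q_\infty$ by an amount of order one, and there the factor $|Q|^{(q_\infty-q(x))/q_\infty}$ can be very large. The way to control this is a dyadic decomposition $Q=\bigcup_j(Q\cap D_j)$: on the bounded part $|x|=O(1)$ one discards the decay estimate and uses only $q(x)\ge q^-$, so that the integrand is $\le|Q|^{-q^-/q_\infty}$ over a set of bounded measure and contributes $o(1)$; on each annulus with $|x|\sim 2^j$ the bound $|q(x)-q_\infty|\lesssim 1/j$ together with $\log|Q|\sim nJ$ (where $2^{J}\sim|Q|^{1/n}$) makes the integrand comparable to $|Q|^{-1}\,2^{n\beta J/j}$ for a fixed constant $\beta$, and multiplying by the measure $\sim 2^{jn}$ gives a term $\sim 2^{n(j-J+\beta J/j)}$. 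The crucial point is that these terms decay geometrically away from the outer annulus $j\approx J$: near $j=J$ one finds an exponent $\approx n(\beta-(J-j))$, whose sum over $j$ is a convergent geometric series of size $\sim 1$, while the annuli with $j$ far below $J$ are exponentially small. The outermost annuli, where $|x|\sim|Q|^{1/n}$ and $q(x)\approx q_\infty$, simultaneously furnish the matching lower bound, since there the integrand is $\gtrsim|Q|^{-1}$ over a set of measure $\sim|Q|$. Combining the three contributions gives $\int_Q\lambda_0^{-q(x)}\,{\rm d}x\sim 1$ and hence the second case; this is precisely the content of \cite[Corollary 4.5.9]{dhhr-0}, whose proof proceeds along these lines.
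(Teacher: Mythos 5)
The paper contains no proof of this lemma to compare with: it is imported verbatim from \cite[Corollary 4.5.9]{dhhr-0}. Judged on its own, your argument is a correct reconstruction of the standard proof of that corollary. The reduction to the modular is legitimate: since $1<q^-\le q^+<\infty$, the map $\lambda\mapsto\int_Q\lambda^{-q(x)}\,{\rm d}x$ is continuous and strictly decreasing from $\infty$ to $0$, and if this modular is pinned between two positive constants at $\lambda=\lambda_0$ then $\|\chi_Q\|_{L^{q(\cdot)}}\sim\lambda_0$, because $\int_Q(c\lambda_0)^{-q(x)}\,{\rm d}x\le c^{-q^-}\int_Q\lambda_0^{-q(x)}\,{\rm d}x$ for $c\ge1$ (and the reverse inequality for $c\le1$). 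The small-cube case is the usual cancellation of $|q(x)-q(x_0)|$ against $|\log|Q||$, and the large-cube case correctly splits $Q$ into a bounded piece handled by $q\ge q^-$ and annuli handled by the decay condition, with the outer part of $Q$ supplying the matching lower bound.

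Three details are glossed over and should be written out. (i) Definition \ref{cd-1}(i) is assumed only for $|x-y|<1/2$, whereas a cube with $|Q|\le 2^n$ has diameter up to $2\sqrt{n}$; the inequality extends to all bounded distances with a larger constant because $q$ is bounded, but this needs saying. (ii) Your assertion that the annuli ``far below $J$'' are exponentially small is not true uniformly: for fixed $j\lesssim\beta$ the term $2^{n(j-J+\beta J/j)}$ blows up as $J\to\infty$ once $\beta>1$. The proof survives because only boundedly many annuli are problematic (those with $j\le 2\beta$, a bound depending on the log-H\"older constant but not on $Q$), and these fall inside the region you treat with the crude bound $q\ge q^-$; for $j\ge 2\beta$ one has $\beta J/j\le J/2$, hence the exponent is at most $n(j-J/2)$, and for $j\ge J/2$ it is at most $n(j-J+2\beta)$, so the remaining sum is indeed $O(1)$. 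Thus your ``bounded part $|x|=O(1)$'' must be calibrated to the log-H\"older constant, and the two regimes must be checked to overlap. (iii) $Q$ need not be centered at the origin. In the upper bound, the annuli with $j>J$ cannot each be assigned measure $2^{jn}$; one groups them, noting that their total measure inside $Q$ is at most $|Q|$ while the integrand there is at most $2^{n\beta}|Q|^{-1}$. In the lower bound one needs that a fixed fraction of any cube with $|Q|\ge1$ lies at distance $\gtrsim|Q|^{1/n}$ from the origin, which is true since the ball of radius $\frac14|Q|^{1/n}$ has measure at most $2^{-n}|Q|$. With these repairs your proof is complete and coincides with the argument in the cited source.
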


The following lemma  comes from \cite{flz}.
\begin{lem}\label{tt-L2}
Let $\Omega \in L^s(\mathbb{S}^{n-1})$. Then there exists a positive constant C such that for  $x \in D_k$, $j \leq k$
\[ \int_{D_j} |\Omega(x-t)|^s {\rm d} t \leq \int_0^{2^{k+1}} \int_{\mathbb{S}^{n-1}} |\Omega(x')|^s d\sigma(x') r^{n-1} {\rm d} r \leq C2^{kn}.\]
\end{lem}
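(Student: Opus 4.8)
The plan is to reduce the left-hand integral to an integral over a full ball by a translation, and then to exploit the degree-zero homogeneity of $\Omega$ by passing to polar coordinates. The starting point is an elementary geometric bound: since $x \in D_k$ forces $|x| \leq 2^k$ and $t \in D_j$ with $j \leq k$ forces $|t| \leq 2^j \leq 2^k$, the triangle inequality gives $|x - t| \leq 2^{k+1}$ for every $t \in D_j$. Performing the translation $y = x - t$ (whose Jacobian has absolute value $1$), the annulus $D_j$ is carried into the set $x - D_j$, which by the preceding bound is contained in the ball $B(0, 2^{k+1})$. Since the integrand is nonnegative, enlarging the domain yields
\[ \int_{D_j} |\Omega(x - t)|^s \, {\rm d}t = \int_{x - D_j} |\Omega(y)|^s \, {\rm d}y \leq \int_{|y| \leq 2^{k+1}} |\Omega(y)|^s \, {\rm d}y, \]
which will give the first inequality once the ball integral is rewritten in polar form.

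Next I would pass to polar coordinates $y = r y'$ with $r = |y|$ and $y' \in \mathbb{S}^{n-1}$, so that ${\rm d}y = r^{n-1} \, {\rm d}r \, {\rm d}\sigma(y')$. Because $\Omega$ is homogeneous of degree zero, $\Omega(y) = \Omega(y')$ depends only on the angular variable, and the ball integral factorizes as
\[ \int_{|y| \leq 2^{k+1}} |\Omega(y)|^s \, {\rm d}y = \left( \int_{\mathbb{S}^{n-1}} |\Omega(y')|^s \, {\rm d}\sigma(y') \right) \left( \int_0^{2^{k+1}} r^{n-1} \, {\rm d}r \right), \]
which, after relabelling the dummy angular variable, is exactly the middle quantity of the statement. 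The angular factor equals $\|\Omega\|_{L^s(\mathbb{S}^{n-1})}^s$, a finite constant by hypothesis, while the radial factor evaluates to $(2^{k+1})^n / n = (2^n/n)\, 2^{kn}$. Taking $C = (2^n/n)\, \|\Omega\|_{L^s(\mathbb{S}^{n-1})}^s$ then delivers the second inequality and closes the argument.

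The computation is essentially routine, and the only point that needs genuine verification is the containment of the translated annulus $x - D_j$ inside $B(0, 2^{k+1})$; this rests solely on the hypothesis $j \leq k$ together with $x \in D_k$, and it is precisely where the constraint $j \leq k$ is consumed. The degree-zero homogeneity of $\Omega$ is what permits the clean separation of the radial and angular integrations, whereas the vanishing-mean condition $\int_{\mathbb{S}^{n-1}} \Omega \, {\rm d}\sigma = 0$ imposed earlier plays no role in this estimate.
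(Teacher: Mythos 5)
Your proof is correct: the triangle-inequality bound $|x-t|\le 2^{k+1}$, the measure-preserving change of variables $y=x-t$, and the passage to polar coordinates via the degree-zero homogeneity of $\Omega$ yield exactly the chain of inequalities in the statement, with $C=(2^n/n)\,\|\Omega\|_{L^s(\mathbb{S}^{n-1})}^s$. The paper itself gives no proof (it cites Fu--Lu--Zhao), but the middle quantity in the lemma is precisely the polar-coordinate integral your argument produces, so yours is essentially the intended argument.
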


\begin{lem}[see {\cite[Lemma 7]{in1}}]\label{tL2}
Let $k,l \in \zz,$ $w \in A_q$ with $q \in [1,\infty),$ $\epsilon \in (0,1)$ be the constant in Lemma \ref{tL3}, $w^-=
\begin{cases}
\epsilon \quad \text{if} \quad \alpha^-\geq 0\\
 q \quad \text{if} \quad \alpha^-< 0
\end{cases}$
and $w^+=
\begin{cases}
q \quad \text{if} \quad \alpha^+ \geq 0\\
\epsilon \quad \text{if} \quad \alpha^+ < 0
\end{cases}.$
If $\alpha(\cdot) \in L^\infty(\rn)$ is $\log$-H\"older continuous both at the origin and infinity, then there is a positive constant $C$ such that for any $x \in D_k$ and $y \in D_l,$
\[ w(B_k)^{\alpha(x)} \leq C w(B_l)^{\alpha(y)} \times \begin{cases}
2^{(k-l)nw^-\alpha^-} \quad &\text{if} \quad l \leq k-1\\
1    &            \text{if} \quad k-1< l \leq k+1\\
 2^{(k-l)nw^-\alpha^-} \quad &\text{if} \quad l > k+1
\end{cases}.\]
\end{lem}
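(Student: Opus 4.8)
The plan is to compare the two homogeneous quantities $w(B_k)^{\alpha(x)}$ and $w(B_l)^{\alpha(y)}$ through two independent mechanisms: the doubling and reverse-doubling estimates for the $A_q$ weight $w$ supplied by Lemma \ref{tL3}, which control the ratio $w(B_k)/w(B_l)$, and the log-H\"older continuity of $\alpha(\cdot)$ at the origin and at infinity (Definition \ref{cd-1}), which controls the oscillation of the exponent across the annuli $D_k$ and $D_l$. Fixing the reference value $c_0:=w(B_0)>0$ and applying Lemma \ref{tL3} to the nested pairs $B_0\subset B_k$ (for $k\ge 0$) and $B_k\subset B_0$ (for $k\le 0$), I would first record that $|\log w(B_k)|\lesssim |k|+1$ for every $k$, and, more precisely, that for nested balls $B_l\subset B_k$ one has the two-sided bound $C^{-1}2^{(k-l)n\epsilon}\le w(B_k)/w(B_l)\le C\,2^{(k-l)nq}$, with $\epsilon$ and $q$ the exponents from Lemma \ref{tL3}.

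Next I would carry out the key reduction of the variable exponent to a constant one. For $x\in D_k$ one has $|x|\sim 2^k$, so log-H\"older continuity gives $|\alpha(x)-\alpha_\infty|\lesssim (|k|+1)^{-1}$ when $k\ge 1$ and $|\alpha(x)-\alpha(0)|\lesssim(|k|+1)^{-1}$ when $k\le 0$. Multiplying by the bound $|\log w(B_k)|\lesssim|k|+1$ shows that the quantity $w(B_k)^{\alpha(x)-\alpha_\infty}=\exp\big((\alpha(x)-\alpha_\infty)\log w(B_k)\big)$ (respectively $w(B_k)^{\alpha(x)-\alpha(0)}$) is bounded above and below by constants independent of $k$ and of $x\in D_k$. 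Hence $w(B_k)^{\alpha(x)}\sim w(B_k)^{\alpha_\infty}$ for $k\ge 1$ and $w(B_k)^{\alpha(x)}\sim w(B_k)^{\alpha(0)}$ for $k\le 0$, and likewise for $w(B_l)^{\alpha(y)}$, all with uniform constants. This is the step I expect to be the crux: it is exactly here that the variable exponent must be prevented from interacting badly with the logarithmically growing weight, and the bounded-oscillation estimate from log-H\"older continuity is what makes the product controllable even in the mixed regime where $x$ lies near the origin and $y$ near infinity, since there the two reductions use different reference values but the resulting cross terms $O((|k|+1)^{-1})\log w(B_k)$ still remain bounded.

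Finally I would assemble the three cases. When $k-1<l\le k+1$ the balls $B_k$ and $B_l$ are comparable, $w(B_k)\sim w(B_l)$, and $x,y$ lie in the same regime, so the reduction gives $w(B_k)^{\alpha(x)}\sim w(B_l)^{\alpha(y)}$ and the factor is $1$. For $l\le k-1$ and for $l>k+1$ I would insert the reduction and estimate the remaining constant-exponent ratio $(w(B_k)/w(B_l))^{\beta}$, with $\beta\in\{\alpha_\infty,\alpha(0)\}$, by the two-sided bound above: the ratio is $\ge 1$ when $B_l\subset B_k$ and $\le 1$ otherwise, so one selects the exponent $q$ or $\epsilon$ according to the sign of $\beta$ and bounds $\beta$ by the appropriate extreme value $\alpha^+$ or $\alpha^-$, so as to produce a genuine upper bound. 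This selection is precisely what is encoded by $w^\pm$ paired with $\alpha^\pm$, and tracking it through the two remaining cases (using $k-l=|k|+|l|$ in the cross-regime) yields the displayed exponential factors. The only real bookkeeping obstacle is this sign analysis, which has to be carried out separately according to whether $\alpha^-$ and $\alpha^+$ are nonnegative or negative in order to land on the stated exponents.
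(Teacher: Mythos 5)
The paper gives no proof of Lemma \ref{tL2} at all; it is quoted from \cite{in1}, so your argument can only be measured against the natural/standard proof. On that score your approach is the right one and essentially the expected one: reduce the variable exponent to the constants $\alpha_\infty$ (for $k\ge 1$) and $\alpha(0)$ (for $k\le 0$) by playing the log-H\"older decay $|\alpha(x)-\alpha_\infty|\lesssim (|k|+1)^{-1}$ against the growth bound $|\log w(B_k)|\lesssim |k|+1$ coming from Lemma \ref{tL3} (this reduction is precisely the content of Lemma \ref{tL4}), and then estimate the remaining constant-exponent ratio $(w(B_k)/w(B_l))^{\beta}$ by the two-sided doubling bounds, splitting through the fixed ball $B_0$ in the mixed regime. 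All of these steps are correct; the only small imprecision is your claim that in the middle case $x$ and $y$ ``lie in the same regime'' (for $(k,l)=(0,1)$ they do not), but this is harmless since that is a single fixed pair of balls whose contribution is absorbed into $C$.

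There is, however, one point you glossed over, and it is exactly the crux of the bookkeeping you deferred. Carried out correctly, your sign analysis gives the factor $2^{(k-l)nw^+\alpha^+}$ in the case $l\le k-1$ (there the ratio $w(B_k)/w(B_l)\ge 1$, so $\beta$ must be bounded \emph{above} by $\alpha^+$ and paired with $q$ or $\epsilon$ according to the sign of $\alpha^+$), and it gives $2^{(k-l)nw^-\alpha^-}$ only in the case $l>k+1$. The statement as printed in this paper has $2^{(k-l)nw^-\alpha^-}$ in \emph{both} cases; that is a misprint, and your assertion that the case analysis ``yields the displayed exponential factors'' cannot be literally true, because the displayed first case is false: take $w\equiv 1$ and $\alpha\equiv 1$, so that $w^-\alpha^-=\epsilon<1$, and the printed bound for $l\le k-1$ would force $2^{(k-l)n(1-\epsilon)}\le C$ for all $k-l\ge 1$, which fails. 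The corrected exponent $w^+\alpha^+$ is also what the paper actually invokes: in the proof of Theorem \ref{bw-T1} the factor $2^{(k-j)w^+\alpha^+}$ is used for $j\le k$, while $2^{(k-j)w^-\alpha^-}$ is used in Theorem \ref{bw-T2} for $j\ge k+1$ (and the hypotheses $\varepsilon_1<0$, $\varepsilon_2>0$ are tailored to exactly this pairing). So your proof is sound, but it proves the intended, corrected statement; you should have flagged the discrepancy with the display rather than claiming to recover it.
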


\begin{lem}[see {\cite[Lemma 8]{in1}}]\label{tL4}
If $\alpha(\cdot) \in L^\infty(\rn)$ and is $\log$-H\"older continuous both at the origin and infinity, then for all $k \in \zz$ and $x \in D_k$,
\begin{align*}
w(D_k)^{\alpha(x)} \approx w(D_k)^{\alpha_\infty},  \ \text{if } k \geq 0, \quad\\
w(D_k)^{\alpha(x)} \approx w(D_k)^{\alpha(0)},  \ \text{if } k \leq -1.
\end{align*}
\end{lem}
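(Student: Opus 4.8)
The plan is to show that in both cases the ratio $w(D_k)^{\alpha(x)}/w(D_k)^{\alpha_\infty}$ (respectively with $\alpha(0)$ in place of $\alpha_\infty$) stays between two positive constants independent of $k$ and of $x\in D_k$. Writing this ratio as $\exp\big((\alpha(x)-\alpha_\infty)\log w(D_k)\big)$, it suffices to bound the exponent $|\alpha(x)-\alpha_\infty|\cdot|\log w(D_k)|$ by an absolute constant. Thus the whole argument reduces to two quantitative estimates: a decay bound for $|\alpha(x)-\alpha_\infty|$ coming from the log-H\"older hypothesis, and an at-most-linear-in-$|k|$ growth bound for $|\log w(D_k)|$ coming from the $A_q$ structure of $w$ (which is the standing assumption, so that Lemma \ref{tL3} applies). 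The decisive feature is that these two rates cancel exactly.

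First I would reduce $w(D_k)$ to $w(B_k)$. Since $D_k\subset B_k$ and $|B_k|/|D_k|=(1-2^{-n})^{-1}$ is a constant independent of $k$, the first inequality of Lemma \ref{tL3} (with $S=D_k$, $B=B_k$) gives $w(B_k)\lesssim w(D_k)$, while monotonicity gives $w(D_k)\le w(B_k)$; hence $w(D_k)\approx w(B_k)$ with constants uniform in $k$. Applying Lemma \ref{tL3} once more with $S=B_{k-1}\subset B=B_k$ (the volume ratio being the constant $2^n$) yields $c_1\le w(B_k)/w(B_{k-1})\le C_1$ for fixed $0<c_1\le C_1$. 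Taking logarithms and summing from $B_0$ to $B_k$, I obtain $|\log w(B_k)-\log w(B_0)|\le M|k|$ for some $M>0$, and therefore $|\log w(D_k)|\lesssim 1+|k|$.

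Next I would extract the decay of $\alpha$. For $k\ge 0$ and $x\in D_k$ one has $|x|>2^{k-1}$, so log-H\"older continuity at infinity gives $|\alpha(x)-\alpha_\infty|\le C_\infty/\log(e+|x|)\lesssim 1/(1+k)$. For $k\le -1$ and $x\in D_k$ one has $|x|\le 2^k\le 1/2$, so $1/|x|\ge 2^{-k}$ and log-H\"older continuity at the origin gives $|\alpha(x)-\alpha(0)|\le C_0/\log(e+1/|x|)\lesssim 1/(1+|k|)$. Combining with the previous paragraph, the exponent satisfies $|\alpha(x)-\alpha_\infty|\,|\log w(D_k)|\lesssim (1+k)^{-1}(1+k)\lesssim 1$ when $k\ge 0$, and likewise $|\alpha(x)-\alpha(0)|\,|\log w(D_k)|\lesssim 1$ when $k\le -1$. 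Exponentiating then gives the two claimed equivalences.

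The main obstacle, and really the only delicate point, is to make sure every constant produced in the reduction $w(D_k)\approx w(B_k)$ and in the ratio bound $w(B_k)/w(B_{k-1})\approx 1$ is genuinely independent of $k$; this is exactly what the scale-invariant form of Lemma \ref{tL3} (equivalently, the doubling and reverse-doubling properties of the $A_q$ weight $w$) guarantees, since all the volume ratios involved are fixed powers of $2$. Once this uniformity is secured, the cancellation of the $1/(1+|k|)$ decay of $\alpha$ against the $O(1+|k|)$ growth of $\log w(D_k)$ is automatic, and the estimate holds with constants depending only on $C_0$, $C_\infty$, $[w]_{A_q}$ and $w(B_0)$.
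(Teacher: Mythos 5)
The paper never proves Lemma \ref{tL4}: it is quoted from Izuki--Noi \cite[Lemma 8]{in1}, so there is no internal proof to compare against. Your argument is correct and is essentially the standard one used in that reference: the two inequalities of Lemma \ref{tL3} give uniform doubling and reverse doubling, hence $w(D_k)\approx w(B_k)$ with constants independent of $k$ and $|\log w(D_k)|\lesssim 1+|k|$, while log-H\"older continuity at infinity (resp.\ at the origin) gives $|\alpha(x)-\alpha_\infty|\lesssim (1+k)^{-1}$ for $x\in D_k$, $k\ge 0$ (resp.\ $|\alpha(x)-\alpha(0)|\lesssim (1+|k|)^{-1}$ for $k\le -1$), so the two rates cancel in the exponent $\exp\big((\alpha(x)-\alpha_\infty)\log w(D_k)\big)$. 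One point worth flagging: the statement as transcribed in this paper omits the hypothesis $w\in A_q$, and you correctly identified that this standing assumption is exactly what is needed for Lemma \ref{tL3} to make all your constants uniform in $k$; your closing accounting of the dependence on $C_0$, $C_\infty$, $[w]_{A_q}$ and $w(B_0)$ is accurate.
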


\begin{lem}[see {\cite[Corollary 3.8]{cw-1}}]\label{tt-L3}
Let $q_2(\cdot)\in C^{\log}(\rn) \cap \mathcal{P}(\rn)$, $0<\beta<n/q^+_1$ and $q_1(\cdot)$ satisfying $1/q_2(\cdot)=1/q_1(\cdot) - \beta/n$. 
If $s \in A_{q_1(\cdot),p_2(\cdot)}$, then $I_\beta$ is bounded from $L^{q_1(\cdot)}(w)$ to $L^{q_2(\cdot)}(w)$.
\end{lem}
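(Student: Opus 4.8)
The aim is to establish $\|I_\beta f\|_{L^{q_2(\cdot)}(w)} \lesssim \|f\|_{L^{q_1(\cdot)}(w)}$, which by the definition of the weighted norm is the same as $\|(I_\beta f)w\|_{L^{q_2(\cdot)}} \lesssim \|fw\|_{L^{q_1(\cdot)}}$. The plan is to deduce this from the classical constant-exponent weighted theory for $I_\beta$ by means of off-diagonal extrapolation, since the hypotheses $q_1(\cdot)\in C^{\log}(\rn)$ and (reading the intended weight condition) $w\in A_{q_1(\cdot),q_2(\cdot)}$ are precisely of the type that extrapolation converts from a whole family of power-weight estimates into a single variable-exponent estimate.

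First I would record the classical Muckenhoupt--Wheeden inequality: for constant exponents $1<p_0<s_0<\fz$ with $1/p_0-1/s_0=\beta/n$ and every power weight $\nu\in A_{p_0,s_0}$,
\[ \Big(\int_{\rn}\big(I_\beta f(x)\big)^{s_0}\nu(x)^{s_0}\,{\rm d}x\Big)^{1/s_0}\lesssim\Big(\int_{\rn}|f(x)|^{p_0}\nu(x)^{p_0}\,{\rm d}x\Big)^{1/p_0}. \]
Rewriting this as $\|(I_\beta f)\nu\|_{L^{s_0}}\lesssim\|f\nu\|_{L^{p_0}}$ exhibits the pair $(|I_\beta f|,|f|)$ as satisfying the full off-diagonal family. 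I would then invoke the off-diagonal Rubio de Francia extrapolation theorem in the variable Lebesgue scale (the mechanism behind Corollary 3.8 of \cite{cw-1}): for any $q_1(\cdot),q_2(\cdot)\in\cp(\rn)$ with $q_1(\cdot)\in C^{\log}(\rn)$ and $1/q_2(\cdot)=1/q_1(\cdot)-\beta/n$, and any $w\in A_{q_1(\cdot),q_2(\cdot)}$, the family above forces the single bound $\|(I_\beta f)w\|_{L^{q_2(\cdot)}}\lesssim\|fw\|_{L^{q_1(\cdot)}}$, which is exactly the conclusion.

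The step I expect to be the main obstacle is the \emph{rescaling} inside the extrapolation argument that reconciles the variable Muckenhoupt class $A_{q_1(\cdot),q_2(\cdot)}$ with the constant-exponent classes $A_{p_0,s_0}$ feeding the classical estimate: one must build a Rubio de Francia iteration operator adapted to $q_1(\cdot)$ and $q_2(\cdot)$ to manufacture, from $w$, a genuine power weight $\nu\in A_{p_0,s_0}$, and this is where the log-H\"older hypothesis enters decisively, via the comparison $\|\chi_B\|_{L^{q(\cdot)}}\sim|B|^{1/q(x)}$ of Lemma \ref{tt-L4} and the reverse-type estimates of Lemma \ref{tt-L5}, to control the passage between the variable modular and true powers of the weight. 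As a self-contained alternative closer to the paper's own machinery, I would instead normalize $f\ge0$ with $\|fw\|_{L^{q_1(\cdot)}}=1$ and use a Hedberg-type splitting of $I_\beta f(x)$ at a radius $\rho$ into a near part dominated by $\rho^\beta\mathcal{M}f(x)$ and a far part estimated through the generalized H\"older inequality together with the testing inequality $\|w\chi_B\|_{L^{q_2(\cdot)}}\|w^{-1}\chi_B\|_{L^{q_1'(\cdot)}}\lesssim|B|^{1-\beta/n}$ of the $A_{q_1(\cdot),q_2(\cdot)}$ condition; optimizing in $\rho$ bounds $I_\beta f$ pointwise by $\mathcal{M}f$, after which the boundedness of $\mathcal{M}$ on $L^{q_2(\cdot)}(w)$ finishes the proof. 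The delicate point in this route is that the variable exponent obstructs a clean optimization over $\rho$, so the log-H\"older continuity must again be used to freeze the exponent locally before the two terms are balanced.
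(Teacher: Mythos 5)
The paper never proves this lemma: it is imported verbatim (typos included) from Cruz-Uribe and Wang \cite[Corollary 3.8]{cw-1}, so there is no internal proof to compare against. Your primary route --- the constant-exponent Muckenhoupt--Wheeden inequality for $I_\beta$ fed into off-diagonal Rubio de Francia extrapolation on the variable Lebesgue scale --- is precisely the mechanism by which the cited Corollary 3.8 is proved, and you also correctly repaired the statement's misprints (``$s \in A_{q_1(\cdot),p_2(\cdot)}$'' should read $w \in A_{q_1(\cdot),q_2(\cdot)}$). Two minor caveats, neither fatal: the Rubio de Francia iteration manufactures $A_1$ (hence Muckenhoupt-type, not ``power'') weights, with log-H\"older continuity entering through the boundedness of $\mathcal{M}$ rather than through the norm estimates of Lemmas \ref{tt-L4}--\ref{tt-L5}; and your alternative Hedberg-type sketch overstates its conclusion, since optimizing the splitting radius bounds $I_\beta f$ pointwise by a fractional power of $\mathcal{M}f$ times a norm factor, not by $\mathcal{M}f$ itself, so that route would need the full Hedberg bookkeeping (and a weighted variable-exponent maximal theorem) to close.
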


\section{Commutators of the Hardy operator and BMO functions }\label{wbl-s-2}

In this section, we show that the boundedness of $m$th order commutators generated by the $n$-dimensional fractional Hardy operator with rough kernel and its adjoint operator with BMO function  on two weighted grand Herz-Morrey spaces with variable exponents.

\begin{lem}[see {\cite[Corollary 3.11]{wx2}}]\label{L10}
Let $b\in {\rm{BMO}}(\mathbb{R}^{n})$, $q(\cdot)\in \mathcal{P}^{\log}(\rn)\cap \mathcal{P}(\rn)$, $w \in A_{q(\cdot)}$, $m\in [1,\infty)$. Then there exists a constant $C>0$ for  $k,i\in \mathbb{N}$ such that $k>i$ 
\[  \| |b - b_{B_i}|^m \chi _{B_k} \|_{L^{q( \cdot )}(w)} \leq C(k - i)^m \| b \|^m_\ast \| \chi _{B_k} \|_{L^{q( \cdot )}(w)}.\]
\end{lem}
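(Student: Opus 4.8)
The plan is to reduce the estimate for the BMO commutator power $|b-b_{B_i}|^m$ to the well-known sharp-maximal / $L^{q(\cdot)}$ norm estimate for a single copy of $b-b_{B_i}$, and to control the growth across the annuli between $B_i$ and $B_k$ by a telescoping argument that produces the factor $(k-i)^m$. First I would record the scalar estimate that for a ball $B$ and any $j$,
\[
\big| b_{B_{j+1}} - b_{B_j} \big| \leq C \|b\|_\ast,
\]
which follows from $|b_{B_{j+1}}-b_{B_j}|\le \frac{1}{|B_j|}\int_{B_j}|b-b_{B_{j+1}}|\,{\rm d}t \le 2^n \frac{1}{|B_{j+1}|}\int_{B_{j+1}}|b-b_{B_{j+1}}|\,{\rm d}t \le C\|b\|_\ast$ because $|B_{j+1}|=2^n|B_j|$. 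Summing telescopically gives $|b_{B_k}-b_{B_i}|\le C(k-i)\|b\|_\ast$ for $k>i$.

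Next I would write $b-b_{B_i}=(b-b_{B_k})+(b_{B_k}-b_{B_i})$ and expand the $m$th power. On $B_k$ the characteristic function localizes everything to the largest ball, so by the binomial theorem and the triangle inequality in $L^{q(\cdot)}(w)$,
\[
\big\| |b-b_{B_i}|^m \chi_{B_k}\big\|_{L^{q(\cdot)}(w)}
\leq C \sum_{\ell=0}^{m} \binom{m}{\ell} |b_{B_k}-b_{B_i}|^{m-\ell}\,
\big\| |b-b_{B_k}|^{\ell}\chi_{B_k}\big\|_{L^{q(\cdot)}(w)}.
\]
For the factor $|b_{B_k}-b_{B_i}|^{m-\ell}$ I invoke the telescoping bound above, which contributes $\big(C(k-i)\|b\|_\ast\big)^{m-\ell}$. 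For the remaining term I would use the known weighted John–Nirenberg type estimate for powers of BMO functions recentered at the ball itself, namely
\[
\big\| |b-b_{B_k}|^{\ell}\chi_{B_k}\big\|_{L^{q(\cdot)}(w)} \leq C\|b\|_\ast^{\ell}\,\|\chi_{B_k}\|_{L^{q(\cdot)}(w)},
\]
valid for $q(\cdot)\in\mathcal P^{\log}(\rn)\cap\mathcal P(\rn)$ and $w\in A_{q(\cdot)}$; this is the variable-exponent weighted analogue of the classical fact that $b-b_B$ lies in $L^{q(\cdot)}(w)$ over $B$ with norm comparable to $\|b\|_\ast \|\chi_B\|_{L^{q(\cdot)}(w)}$, and it is precisely the content underlying the cited corollary. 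Combining the two factors and summing over $\ell$ (absorbing the binomial coefficients and the finitely many powers of $\|b\|_\ast$ into the constant $C$), the dominant contribution is the $\ell=0$ term, which yields $C(k-i)^m\|b\|_\ast^m\|\chi_{B_k}\|_{L^{q(\cdot)}(w)}$, while every other term carries a strictly smaller power of $(k-i)$ and is therefore also dominated by the same expression for $k>i$.

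The main obstacle is the recentered power estimate $\big\||b-b_{B_k}|^{\ell}\chi_{B_k}\big\|_{L^{q(\cdot)}(w)}\lesssim \|b\|_\ast^{\ell}\|\chi_{B_k}\|_{L^{q(\cdot)}(w)}$: in the constant-exponent unweighted setting it is immediate from John–Nirenberg, but in the weighted variable-exponent setting one must control the exponential integrability of $b-b_{B_k}$ against the weight $w\in A_{q(\cdot)}$ and the fluctuating exponent $q(\cdot)$. I would handle this by appealing directly to the generalized John–Nirenberg inequality adapted to $A_{q(\cdot)}$ weights together with the reverse Hölder property of such weights, which is exactly the machinery packaged in the cited result; the telescoping step and the binomial expansion are routine once that estimate is in hand.
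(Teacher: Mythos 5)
The paper never proves this lemma itself: it is imported verbatim as \cite[Corollary 3.11]{wx2}, so there is no internal argument to measure your proposal against, and the only meaningful comparison is with the standard literature proof --- which is exactly the route you take. Your structure is sound: the telescoping bound $|b_{B_k}-b_{B_i}|\le C(k-i)\|b\|_\ast$ is proved correctly, the recentering $b-b_{B_i}=(b-b_{B_k})+(b_{B_k}-b_{B_i})$ is the right decomposition, and reducing the claim to the recentered power estimate $\| |b-b_{B_k}|^{\ell}\chi_{B_k}\|_{L^{q(\cdot)}(w)}\lesssim \|b\|_\ast^{\ell}\|\chi_{B_k}\|_{L^{q(\cdot)}(w)}$ is legitimate: that estimate (the weighted variable-exponent John--Nirenberg inequality, valid for log-H\"older $q(\cdot)$ and $w\in A_{q(\cdot)}$) is a known lemma appearing in Izuki--Noi's work and in \cite{wx2} itself, so treating it as a black box is no worse than what the paper does in citing the whole lemma.

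Two caveats. First, the statement allows real $m\in[1,\infty)$, whereas your binomial expansion $\sum_{\ell=0}^{m}\binom{m}{\ell}$ only makes sense for integer $m$. For general $m\ge 1$ replace it by the convexity inequality $|a+b|^{m}\le 2^{m-1}\big(|a|^{m}+|b|^{m}\big)$, which splits the norm into just the two terms $\| |b-b_{B_k}|^{m}\chi_{B_k}\|_{L^{q(\cdot)}(w)}$ and $|b_{B_k}-b_{B_i}|^{m}\,\|\chi_{B_k}\|_{L^{q(\cdot)}(w)}$, each bounded exactly as in your argument; this both repairs the gap and shortens the proof, since the discussion of which binomial term dominates becomes unnecessary. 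Second, your final bookkeeping uses $(k-i)^{m-\ell}\le (k-i)^{m}$, which holds only because $k>i$ guarantees $k-i\ge 1$; it is worth stating explicitly that this is where that hypothesis enters.
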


\begin{thm}\label{bw-T1}
Let $0<\beta<n/q^+_1$,  $\theta \in (0,\infty)$, $\lambda \in [0, \infty)$, $b \in {\rm BMO}(\rn)$, $\alpha(\cdot) \in L^\infty(\rn)$,  $1<p<\infty$,  $q_2(\cdot)\in C^{\log}(\rn) \cap \cp(\rn)$ and $q_1(\cdot)$ satisfying $1/q_2(\cdot)=1/q_1(\cdot) - \beta/n$.    
Let $v\in A_r$ for some $r \in [1,\infty)$, $w \in A_{q_2(\cdot)}$, $\Omega \in L^s(S^{n-1})$ with $s>q^{\prime}_1(\cdot)$ and $w^+\alpha^+ + \beta+n/s-n\delta_2<0$,  where  $\delta_2 \in (0,1)$ is the constant in Lemma \ref{tt-L5} for $q(\cdot)$. 
Suppose that  $\alpha(\cdot)$ is log-H\"older continuous at infinity and at the origin. Then
\[\|\mathcal{H}^{b,m}_{\Omega,\beta} f\|_{M\dot{K}_{q_2(\cdot),\lambda}^{\alpha (\cdot),p),\theta}(v,w)} \les  \|b\|^m_\ast \| f\|_{M\dot{K}_{q_1(\cdot),\lambda}^{\alpha (\cdot),p),\theta}(v,w)}.\]
\end{thm}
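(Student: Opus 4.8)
## Proof Plan

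The plan is to decompose $f$ dyadically and exploit the block structure of the grand Herz-Morrey norm. First I would write $f = \sum_{j=-\infty}^{\infty} f\chi_j$ and, for each annulus $D_k$, observe that since $\mathcal{H}^{b,m}_{\Omega,\beta}$ involves integration over $\{|t|\le|x|\}$, only blocks with $j\le k$ contribute. So I would estimate
\[
\big\|v(B_k)^{\alpha(\cdot)/n}\,(\mathcal{H}^{b,m}_{\Omega,\beta}f)\chi_k\big\|_{L^{q_2(\cdot)}(w)}
\les \sum_{j=-\infty}^{k}\big\|v(B_k)^{\alpha(\cdot)/n}\,(\mathcal{H}^{b,m}_{\Omega,\beta}(f\chi_j))\chi_k\big\|_{L^{q_2(\cdot)}(w)}.
\]
The commutator kernel carries the factor $(b(x)-b(t))^m$, which I would split via the BMO mean $b_{B_k}$ as $(b(x)-b(t))^m = \sum_{l=0}^{m}\binom{m}{l}(b(x)-b_{B_k})^{l}(b_{B_k}-b(t))^{m-l}$, turning each summand into a product of a localized BMO factor in $x$ and one in $t$. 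The factor in $x$ is absorbed by Lemma \ref{L10}, yielding a polynomial growth $(k-j)^{m}$, while the factor in $t$ is handled inside the integral against $f\chi_j$.

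Next I would obtain the pointwise/integral control on each block. For $x\in D_k$ and $t\in D_j$ with $j\le k$, I would use $|x|\sim 2^k$ so that $|x|^{-(n-\beta)}\sim 2^{-k(n-\beta)}$, and apply Hölder's inequality in the $t$-integral with the rough kernel $\Omega(x-t)\in L^s(\mathbb{S}^{n-1})$ estimated through Lemma \ref{tt-L2}, which gives $\int_{D_j}|\Omega(x-t)|^s\,{\rm d}t\les 2^{kn}$. Since $s>q_1'(\cdot)$, the conjugate exponent lets me pair $|f(t)||b_{B_k}-b(t)|^{m-l}$ against the kernel and pass to the $L^{q_1(\cdot)}(w)$ norm of $f\chi_j$, again invoking Lemma \ref{L10} for the BMO factor. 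Combining with the weight comparison from Lemma \ref{tt-L5} (the constant $\delta_2$) and the weight-power estimate Lemma \ref{tL2} relating $v(B_k)^{\alpha(x)}$ to $v(B_j)^{\alpha(y)}$, the $x$-norm $\|\chi_k\|_{L^{q_2(\cdot)}(w)}$ and $t$-norm quotients produce a geometric decay factor in $(k-j)$. The hypothesis $w^+\alpha^+ + \beta + n/s - n\delta_2 < 0$ is precisely what guarantees the exponent of $2^{(k-j)(\cdots)}$ is negative, so the inner sum converges after multiplying by the polynomial $(k-j)^m$.

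The final step is to restore the grand Herz-Morrey norm. After the block estimate I arrive at a bound of the form
\[
\big\|v(B_k)^{\alpha(\cdot)/n}(\mathcal{H}^{b,m}_{\Omega,\beta}f)\chi_k\big\|_{L^{q_2(\cdot)}(w)}
\les \|b\|_\ast^m \sum_{j=-\infty}^{k}(k-j)^m\,2^{(j-k)\eta}\,\big\|v(B_j)^{\alpha(\cdot)/n}f\chi_j\big\|_{L^{q_1(\cdot)}(w)}
\]
with $\eta>0$ from the previous paragraph. Raising to the power $p(1+\delta)$, summing over $k\le k_0$, and applying a discrete Young/Hölder inequality for the convolution against the summable sequence $(k-j)^m 2^{(j-k)\eta}$ collapses the double sum; the factor $2^{-k_0\lambda}$ and the suprema over $\delta>0$ and $k_0\in\zz$ carry through unchanged, delivering $\|b\|_\ast^m\|f\|_{M\dot{K}_{q_1(\cdot),\lambda}^{\alpha(\cdot),p),\theta}(v,w)}$. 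I expect the main obstacle to be the bookkeeping of the two-weight and variable-exponent growth factors: one must carefully track how $v(B_k)^{\alpha(\cdot)/n}$, the $w$-weighted characteristic norms, the kernel contribution $2^{(k-j)n/s}$, and the fractional scaling $2^{(j-k)\beta}$ combine so that the sign condition $w^+\alpha^+ + \beta + n/s - n\delta_2 < 0$ indeed yields net exponential decay uniformly in $x\in D_k$; handling the variable exponent $\alpha(\cdot)$ via Lemmas \ref{tL2} and \ref{tL4} (splitting the cases $k\ge 0$ and $k\le -1$, and $\alpha^\pm$ signs) is where the delicate estimates concentrate.
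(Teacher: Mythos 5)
Your plan reproduces the paper's proof structure almost exactly: the dyadic decomposition with only $j\le k$ contributing, the splitting of $(b(x)-b(t))^m$ into a localized $x$-factor and a $t$-factor each absorbed by Lemma \ref{L10}, H\"older's inequality in $t$ with the rough kernel controlled through Lemma \ref{tt-L2}, the weight comparisons of Lemmas \ref{tL2}, \ref{tL4} and \ref{tt-L5}, and the final discrete H\"older argument split over the ranges $k\le -1$ and $0\le k\le k_0$. (The paper centers the BMO splitting at $b_{D_j}$ rather than $b_{B_k}$; that difference is immaterial.)

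There is, however, one genuine omission, and it sits exactly at the point you defer to ``bookkeeping''. After H\"older's inequality the block estimate pairs $\|v(B_j)^{\alpha(\cdot)/n}f_j\|_{L^{q_1(\cdot)}(w)}$, a $q_1$-norm, against $\|\Omega(x-\cdot)\chi_j\|_{L^{q_1'(\cdot)}(w^{-1})}\,\|\chi_k\|_{L^{q_2(\cdot)}(w)}$, and to extract geometric decay one must convert between characteristic-function norms taken with the two \emph{different} exponents $q_1(\cdot)$ and $q_2(\cdot)$. The tools you cite for this step, Lemmas \ref{tt-L5} and \ref{tL2}, each operate within a single exponent and cannot make that conversion; in particular, the ``fractional scaling $2^{(j-k)\beta}$'' you invoke has no source in them. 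The paper's mechanism is the pointwise bound $\chi_{B_j}\les 2^{-j\beta}I_\beta(\chi_{B_j})$ combined with the weighted boundedness of $I_\beta$ from $L^{q_1(\cdot)}(w)$ to $L^{q_2(\cdot)}(w)$ (Lemma \ref{tt-L3}) and the product condition defining $A_{q(\cdot)}$ (Definition \ref{dg1}), which together yield $\|\chi_j\|_{L^{q_2(\cdot)}(w)}\les 2^{j(n-\beta)}\|\chi_j\|^{-1}_{L^{q_1'(\cdot)}(w^{-1})}$; this, together with Lemma \ref{tt-L5} applied in the dual space $L^{q_2'(\cdot)}(w^{-1})$, is what produces the factor $2^{\varepsilon_1(k-j)}$ with $\varepsilon_1=w^+\alpha^+ +\beta+n/s-n\delta_2<0$. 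Without this ingredient (or an equivalent bridge between the two exponents) your inner sum cannot be closed, so you should add it; everything else in your outline matches the paper's argument.
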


\begin{proof}
 Since bounded functions with compact support are dense in $M\dot{K}_{q_2(\cdot),\lambda}^{\alpha (\cdot),p),\theta}(v,w)$, we only consider that $f$ is a bounded functions with compact support and write
\[f=\sum^\infty_{j=-\infty}f \chi_j=:\sum^\infty_{j=-\infty}f_j.\]
By Lemma \ref{tL2}, we have
\begin{align*}
&|v(B_k)^{\alpha(x)/n} \mathcal{H}^{b,m}_{\Omega,\beta} f(x) \chi_k(x)| \\
&\leq  v(B_k)^{\alpha(x)/n} \frac{1}{|x|^{n-\beta}} \int_{B_k} |b(x)-b(t)|^m |\Omega(x-t) f(t)| {\rm d} t \chi_k(x)\\
&\leq  v(B_k)^{\alpha(x)/n} \frac{1}{|x|^{n-\beta}} \sum_{j=-\infty}^k \int_{D_j} |b(x)-b(t)|^m |\Omega(x-t) f(t)| {\rm d} t \chi_k(x)\\
& \leq 2^{k(\beta-n)} \sum_{j=-\infty}^k 2^{(k-j)w^+\alpha^+}  \int_{D_j} |b(x)-b(t)|^m | v(B_j)^{\alpha(t)/n} \Omega(x-t) f(t)| {\rm d} t \chi_k(x)\\
& \leq 2^{k(\beta-n)} \sum_{j=-\infty}^k 2^{(k-j)w^+\alpha^+}  \int_{D_j} |b(x)-b_{D_j}|^m | v(B_j)^{\alpha(t)/n} \Omega(x-t) f(t)| {\rm d} t \chi_k(x)\\
& \quad + 2^{k(\beta-n)} \sum_{j=-\infty}^k 2^{(k-j)w^+\alpha^+}  \int_{D_j} |b(t)-b_{D_j}|^m | v(B_j)^{\alpha(t)/n} \Omega(x-t) f(t)| {\rm d} t \chi_k(x)\\
&=: E_1 + E_2.
\end{align*}
Estimate $E_1$. By  H\"{o}lder's inequality, we have
\begin{align}\label{ow-1}
E_1 &\les 2^{k(\beta-n)} \sum_{j=-\infty}^k 2^{(k-j)w^+\alpha^+} |b(x)-b_{D_j}|^m \no \\
& \quad \times \|v(B_j)^{\alpha(\cdot)/n} f_j\|_{L^{q_1(\cdot)}(w)} \|\Omega(x-\cdot)\chi_j\|_{L^{q'_1(\cdot)}(w^{-1})}  \chi_k(x) .
\end{align}
Taking the $L^{q_2(\cdot)}(w)$-norm on both sides of (\ref{ow-1}), then by Lemma \ref{L10}, we have 
\begin{align}\label{ow-2}
\|E_1 \|_{L^{q_2(\cdot)}(w)}
& \les 2^{k(\beta-n)} \sum_{j=-\infty}^k 2^{(k-j)w^+\alpha^+} \| |b(x)-b_{D_j}|^m \chi_k\|_{L^{q_2(\cdot)}(w)}  \no\\
& \quad \times \|v(B_j)^{\alpha(\cdot)/n} f_j\|_{L^{q_1(\cdot)}(w)}  \|\Omega(x-\cdot)\chi_j\|_{L^{q'_1(\cdot)}(w^{-1})}  \no \\
& \les 2^{k(\beta-n)} \sum_{j=-\infty}^k 2^{(k-j)w^+\alpha^+}   \|b\|^m_\ast  (k-j)^m \|v(B_j)^{\alpha(\cdot)/n} f_j\|_{L^{q_1(\cdot)}(w)} \no \\
& \quad \times   \|\Omega(x-\cdot)\chi_j\|_{L^{q'_1(\cdot)}(w^{-1})} \| \chi_k\|_{L^{q_2(\cdot)}(w)}.
\end{align}
Estimate $E_2$. By  H\"{o}lder's inequality again, we have
\begin{align}\label{ow-3}
E_2 &\les   2^{k(\beta-n)} \sum_{j=-\infty}^k 2^{(k-j)w^+\alpha^+} \|v(B_j)^{\alpha(\cdot)/n} f_j\|_{L^{q_1(\cdot)}(w)} \no \\
& \quad \times  \| |b(x)-b_{D_j}|^m \Omega(x-\cdot)\chi_j\|_{L^{q'_1(\cdot)}(w^{-1})}  \chi_k(x) .
\end{align}
Taking the $L^{q_2(\cdot)}(w)$-norm on both sides of (\ref{ow-3}), then by Lemma \ref{L10}, we have
\begin{align}\label{ow-4}
\|E_2 \|_{L^{q_2(\cdot)}(w)} 
&\les  2^{k(\beta-n)} \sum_{j=-\infty}^k 2^{(k-j)w^+\alpha^+}  \|v(B_j)^{\alpha(\cdot)/n} f_j\|_{L^{q_1(\cdot)}(w)} \no \\
& \quad \times  \| |b(x)-b_{D_j}|^m \Omega(x-\cdot)\chi_j\|_{L^{q'_1(\cdot)}(w^{-1})}  \|\chi_k(x) \|_{L^{q_2(\cdot)}(w)} \no\\
& \les   2^{k(\beta-n)} \|b\|^m_\ast \sum_{j=-\infty}^k 2^{(k-j)w^+\alpha^+} \|v(B_j)^{\alpha(\cdot)/n} f_j\|_{L^{q_1(\cdot)}(w)} \no\\
& \quad \times  \| \Omega(x-\cdot)\chi_j\|_{L^{q'_1(\cdot)}(w^{-1})}  \|\chi_k(x) \|_{L^{q_2(\cdot)}(w)}.
\end{align}
Thus, by (\ref{ow-2}) and (\ref{ow-4}), we obtain
\begin{align}\label{ow-5}
&\|v(B_k)^{\alpha(\cdot)/n} \mathcal{H}^{b,m}_{\Omega,\beta} (f) \chi_k \|_{L^{q_2(\cdot)}(w)} \no \\
&\quad  \les   2^{k(\beta-n)} \|b\|_\ast^m  \sum_{j-\infty}^k (k-j)^m 2^{(k-j)w^+\alpha^+} \|v(B_j)^{\alpha(\cdot)/n} f_j\|_{L^{q_1(\cdot)}(w)} \no \\
& \quad \quad \times  \| \Omega(x-\cdot)\chi_j\|_{L^{q'_1(\cdot)}(w^{-1})}  \|\chi_k(x) \|_{L^{q_2(\cdot)}(w)}.
\end{align}
Since $s>q_1^{\prime}(\cdot)$, there exists $\gamma^{\prime}(\cdot)$ such that $1/q^{\prime}_1(\cdot)=1/s + 1/\gamma^{\prime}(\cdot)$. Then by  H\"{o}lder's inequality and Lemma \ref{tt-L2}, we obtain
\begin{equation}\label{ow-6}
\|\Omega(x-\cdot)\chi_j\|_{L^{q'_1(\cdot)}(w^{-1})} \les \|\Omega(x-\cdot)\|_{L^s} \|\chi_j\|_{L^{\gamma'(\cdot)}(w^{-1})} \les 2^{kn/s} \|\chi_j\|_{L^{\gamma'(\cdot)}(w^{-1})}.
\end{equation}
Note that $\|\chi_j\|_{L^{q_2(\cdot)}(w)} \leq \|\chi_{B_j}\|_{L^{q_2(\cdot)}(w)}$ and $\chi_{B_j} \les 2^{-j \beta} I_\beta (\chi_{B_j})$.  Thus, by Lemma \ref{tt-L3} and Definition \ref{dg1}, we obtain
\begin{align}\label{ow-7}
\|\chi_j\|_{L^{q_2(\cdot)}(w)}
& \les 2^{-j\beta} \|I_\beta (\chi_{B_j})\|_{L^{q_2(\cdot)}(w)} \les 2^{-j(\beta+\theta)} \|\chi_{B_j}\|_{L^{q_1(\cdot)}(w)} \no \\
&\les 2^{-j\beta} \|\chi_j\|_{L^{q_1(\cdot)}(w)}  \les 2^{j(n-\beta)} \|\chi_j\|^{-1}_{L^{q'_1(\cdot)}(w^{-1})}.
\end{align}
By Lemmas \ref{tt-L4} and \ref{tt-L5}, Definition \ref{dg1}, (\ref{ow-6}) and (\ref{ow-7}), we have
\begin{align}\label{ow-8}
& 2^{(k-j)w^+\alpha^+} 2^{k(\beta-n+n/s)} \|\chi_j\|_{L^{\gamma'(\cdot)}(w^{-1})} \|\chi_k\|_{L^{q_2(\cdot)}(w)} \no\\
& \quad \les 2^{(k-j)w^+\alpha^+} 2^{k(\beta-n+n/s)} 2^{-jn/s} \|\chi_j\|_{L^{q'_1(\cdot)}(w^{-1})} \|\chi_k\|_{L^{q_2(\cdot)}(w)} \no\\
&\quad \les 2^{(k-j)w^+\alpha^+} 2^{k(\beta+n/s)} 2^{-jn/s} \|\chi_j\|_{L^{q'_1(\cdot)}(w^{-1})} \|\chi_k\|^{-1}_{L^{q'_2(\cdot)}(w^{-1})} \no\\
&\quad \les  2^{(k-j)w^+\alpha^+} 2^{k(\beta+n/s)} 2^{n\delta_2(j-k)} 2^{-jn/s} \|\chi_j\|_{L^{q'_1(\cdot)}(w^{-1})} \|\chi_j\|^{-1}_{L^{q'_2(\cdot)}(w^{-1})} \no\\
&\quad \les  2^{(k-j)w^+\alpha^+} 2^{k(\beta+n/s)} 2^{n\delta_2(j-k)} 2^{j(n-\beta)} 2^{-jn/s} \|\chi_j\|^{-1}_{L^{q_2(\cdot)}(w)} \|\chi_j\|^{-1}_{L^{q'_2(\cdot)}(w^{-1})} \no\\
&\quad \les 2^{\varepsilon_1(k-j)},
\end{align}
where $\varepsilon_1=w^+\alpha^+ + \beta+n/s-n\delta_2$. Therefore, by (\ref{ow-5})-(\ref{ow-8}), we have
\begin{align*}
&\|v(B_k)^{\alpha(\cdot)/n} \mathcal{H}^{b,m}_{\Omega,\beta} (f) \chi_k \|_{L^{q_2(\cdot)}(w)} 
\les \|b\|_\ast^m  \sum_{j=-\infty}^k (k-j)^m 
 \|v(B_j)^{\alpha(\cdot)/n} f_j\|_{L^{q_1(\cdot)}(w)} 2^{\varepsilon_1(k-j)} .
 \end{align*}
By Lemma \ref{tL4}, we have
\begin{align*}
\|\mathcal{H}^{b,m}_{\Omega,\beta} f\|_{M\dot{K}_{q_2(\cdot),\lambda}^{\alpha (\cdot),q),\theta}(v,w)}
& \les \sup_{\delta>0} \sup_{k_0 \in \mathbb{Z}} 2^{-k_0\lambda} \bigg( \delta^\theta \sum_{k=-\infty}^{-1} \|v(B_k)^{\alpha(\cdot)/n} \mathcal{H}^{b,m}_{\Omega,\beta} f \chi_k \|_{L^{q_2(\cdot)}(w)}^{p(1+\delta)} \bigg)^{\frac{1}{p(1+\delta)}} \\
&  \quad + \sup_{\delta>0} \sup_{k_0 \in \mathbb{Z}} 2^{-k_0\lambda} \bigg( \delta^\theta \sum_{k=0}^{k_0} \|v(B_k)^{\alpha(\cdot)/n} \mathcal{H}^{b,m}_{\Omega,\beta} f \chi_k \|_{L^{q_2(\cdot)}(w)}^{p(1+\delta)} \bigg)^{\frac{1}{p(1+\delta)}} \\
&=: F_1+F_2.
\end{align*}
Estimate $F_1$. Since $\varepsilon_1<0$, by  H\"{o}lder's inequality, we obtain 
\begin{align*}
F_1 &\les \sup_{\delta>0} \sup_{k_0 \in \mathbb{Z}} 2^{-k_0\lambda} \bigg( \delta^\theta \sum_{k=-\infty}^{-1} \|v(B_k)^{\alpha(\cdot)/n} \mathcal{H}^{b,m}_{\Omega,\beta} f \chi_k \|_{L^{q_2(\cdot)}(w)}^{p(1+\delta)} \bigg)^{\frac{1}{p(1+\delta)}} \\
&\les \|b\|_\ast^m \sup_{\delta>0} \sup_{k_0 \in \mathbb{Z}} 2^{-k_0\lambda} \bigg( \delta^\theta \sum_{k=-\infty}^{-1} \bigg(   \sum_{j=-\infty}^k  2^{\varepsilon_1(k-j)p(1+\delta)/2} \|v(B_j)^{\alpha(\cdot)/n} f_j\|_{L^{q_1(\cdot)}(w)}^{p(1+\delta)} \bigg) \\
& \quad \times  \bigg(   \sum_{j=-\infty}^k (k-j)^{m(p(1+\delta))^{\prime}} 2^{\varepsilon_1(k-j) (p(1+\delta))^{\prime}/2} \bigg)^{\frac{p(1+\delta)}{(p(1+\delta))^{\prime}}} \bigg)^{\frac{1}{p(1+\delta)}} \\
& \les \|b\|_\ast^m \sup_{\delta>0} \sup_{k_0 \in \mathbb{Z}} 2^{-k_0\lambda} \bigg( \delta^\theta \sum_{j=-\infty}^{-1} \|v(B_j)^{\alpha(\cdot)/n} f_j\|_{L^{q_1(\cdot)}(w)} ^{p(1+\delta)} \bigg)^{\frac{1}{p(1+\delta)}} \\
&\les \|b\|_\ast^m \| f\|_{M\dot{K}_{q_1(\cdot),\lambda}^{\alpha (\cdot),q),\theta}(v,w)}.
\end{align*}
Estimate $F_2$.
\begin{align*}
F_2 &\les \|b\|_\ast^m \sup_{\delta>0} \sup_{k_0 \in \mathbb{Z}} 2^{-k_0\lambda} \bigg( \delta^\theta \sum_{k=0}^{k_0} \bigg(   \sum_{j=-\infty}^k (k-j)^m 2^{\varepsilon_1(k-j)} \\
& \quad \times \|v(B_j)^{\alpha(\cdot)/n} f_j\|_{L^{q_1(\cdot)}(w)}  \bigg)^{p(1+\delta)} \bigg)^{\frac{1}{p(1+\delta)}} \\
& \les \|b\|_\ast^m \sup_{\delta>0} \sup_{k_0 \in \mathbb{Z}} 2^{-k_0\lambda} \bigg( \delta^\theta \sum_{k=0}^{k_0} \bigg(   \sum_{j=-\infty}^{-1} (k-j)^m 2^{\varepsilon_1(k-j)} \|v(B_j)^{\alpha(\cdot)/n} f_j\|_{L^{q_1(\cdot)}(w)}\\
& \quad + \sum_{j=0}^k (k-j)^m 2^{\varepsilon_1(k-j)} \|v(B_j)^{\alpha(\cdot)/n} f_j\|_{L^{q_1(\cdot)}(w)} \bigg)^{p(1+\delta)} \bigg)^{\frac{1}{p(1+\delta)}} \\
& \les \|b\|_\ast^m \sup_{\delta>0} \sup_{k_0 \in \mathbb{Z}} 2^{-k_0\lambda} \bigg( \delta^\theta \sum_{k=0}^{k_0} \bigg(   \sum_{j=-\infty}^{-1} (k-j)^m 2^{\varepsilon_1(k-j)} \|v(B_j)^{\alpha(\cdot)/n} f_j\|_{L^{q_1(\cdot)}(w)} \bigg)^{p(1+\delta)} \bigg)^{\frac{1}{p(1+\delta)}} \\
& \quad + \|b\|_\ast^m \sup_{\delta>0} \sup_{k_0 \in \mathbb{Z}} 2^{-k_0\lambda} \bigg( \delta^\theta \sum_{k=0}^{k_0} \sum_{j=0}^k (k-j)^m 2^{\varepsilon_1(k-j)} \|v(B_j)^{\alpha(\cdot)/n} f_j\|_{L^{q_1(\cdot)}(w)} \bigg)^{p(1+\delta)} \bigg)^{\frac{1}{p(1+\delta)}} \\
&=: F_{2,1} + F_{2,2}.
\end{align*}
Estimate $F_{2,1}$.  Since $\varepsilon_1<0$, by  H\"{o}lder's inequality, we have
\begin{align*}
F_{2,1} & \les \|b\|_\ast^m \sup_{\delta>0} \sup_{k_0 \in \mathbb{Z}} 2^{-k_0\lambda} \bigg( \delta^\theta \sum_{k=0}^{k_0} 2^{\varepsilon_1 kp(1+\delta)}   \\
& \quad \times \bigg(   \sum_{j=-\infty}^{-1} (k-j)^m 2^{-\varepsilon_1 j}  \|v(B_j)^{\alpha(\cdot)/n} f_j\|_{L^{q_1(\cdot)}(w)} \bigg)^{p(1+\delta)} \bigg)^{\frac{1}{p(1+\delta)}} \\
&\les \|b\|_\ast^m \sup_{\delta>0} \sup_{k_0 \in \mathbb{Z}} 2^{-k_0\lambda}  \bigg( \delta^\theta  \sum_{k=0}^{k_0} \bigg( \sum_{j=-\infty}^{-1} \|v(B_j)^{\alpha(\cdot)/n} f_j\|_{L^{q_1(\cdot)}(w)}^{p(1+\delta)} \bigg) \\
& \quad \times \bigg(   \sum_{j=-\infty}^{-1} (k-j)^{m(p(1+\delta))^{\prime}} 2^{-\varepsilon_1 j (p(1+\delta))^{\prime}}   \bigg)^{\frac{p(1+\delta)}{(p(1+\delta))^{\prime}}} \bigg)^{\frac{1}{p(1+\delta)}}  \\
& \les \|b\|_\ast^m \sup_{\delta>0} \sup_{k_0 \in \mathbb{Z}} 2^{-k_0\lambda} \bigg( \delta^\theta \sum_{j=-\infty}^{-1}  \|v(B_j)^{\alpha(\cdot)/n} f_j\|_{L^{q_1(\cdot)}(w)}^{p(1+\delta)} \bigg)^{\frac{1}{p(1+\delta)}}  \\
&\les \|b\|_\ast^m \| f\|_{M\dot{K}_{q_1(\cdot),\lambda}^{\alpha (\cdot),q),\theta}(v,w)} .
\end{align*}
Estimate $F_{2,2}$ is similar to estimate $F_1$, we have
\[ F_{2,2} \les \|b\|_\ast^m \| f\|_{M\dot{K}_{q_1(\cdot),\lambda}^{\alpha (\cdot),p),\theta}(v,w)}. \]
Combining the estimates of $E$ and $F$, we get
\[\|\mathcal{H}^{b,m}_{\Omega,\beta} f\|_{M\dot{K}_{q_2(\cdot),\lambda}^{\alpha (\cdot),q),\theta}(v,w)}  \les \|b\|_\ast^m \| f\|_{M\dot{K}_{q_1(\cdot),\lambda}^{\alpha (\cdot),p),\theta}(v,w)}. \]
This completes the proof.
\end{proof}

\begin{thm}\label{bw-T2}
Let $0<\beta<n/q^+_1$,  $\theta \in (0,\infty)$, $\lambda \in [0, \infty)$, $b \in {\rm BMO}(\rn)$, $\alpha(\cdot) \in L^\infty(\rn)$,  $1<p<\infty$,  $q_2(\cdot)\in C^{\log}(\rn) \cap \cp(\rn)$ and $q_1(\cdot)$ satisfying $1/q_2(\cdot)=1/q_1(\cdot) - \beta/n$.    
Let $v\in A_r$ for some $r \in [1,\infty)$, $w \in A_{q_2(\cdot)}$, $\Omega \in L^s(S^{n-1})$ with $s>q^{\prime}_1(\cdot)$ and $w^-\alpha^- -\beta+n\delta_1>0$,  where  $\delta_1 \in (0,1)$ is the constant in Lemma \ref{tt-L5} for $q(\cdot)$. 
Suppose that  $\alpha(\cdot)$ is log-H\"older continuous at infinity and at the origin. Then
\[\|\mathcal{H}^{\ast,b,m}_{\Omega,\beta} f\|_{M\dot{K}_{q_2(\cdot),\lambda}^{\alpha (\cdot),p),\theta}(v,w)} \les \|b\|_\ast^m \| f\|_{M\dot{K}_{q_1(\cdot),\lambda}^{\alpha (\cdot),p),\theta}(v,w)}.\]
\end{thm}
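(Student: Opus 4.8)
The plan is to follow the scheme of the proof of Theorem~\ref{bw-T1} step for step in its analytic content, the one structural change being dictated by the domain of integration of the adjoint operator: since $\mathcal{H}^{\ast,b,m}_{\Omega,\beta}$ integrates over $\{|t|>|x|\}$, for $x\in D_k$ only the outer annuli $D_j$ with $j\ge k$ contribute. Accordingly, after the density reduction to bounded, compactly supported $f$ and the annular decomposition $f=\sum_{j}f\chi_j=:\sum_j f_j$, the inner sum now runs over $j\ge k$ rather than over $j\le k$. I would first produce the pointwise bound for $v(B_k)^{\alpha(\cdot)/n}\mathcal{H}^{\ast,b,m}_{\Omega,\beta}(f)\chi_k$, transferring the weight factor $v(B_k)^{\alpha(x)/n}$ onto $v(B_j)^{\alpha(t)/n}$ by Lemma~\ref{tL2}; because now the second index $j>k+1$, it is the third branch of that lemma that applies, producing the factor $2^{(k-j)w^-\alpha^-}$ in place of the $2^{(k-j)w^+\alpha^+}$ that occurs in Theorem~\ref{bw-T1}. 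This is precisely why the hypothesis of the present theorem is phrased through $w^-\alpha^-$ and $\delta_1$ rather than through $w^+\alpha^+$ and $\delta_2$.

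Next, using $|b(x)-b(t)|^m\lesssim |b(x)-b_{D_j}|^m+|b(t)-b_{D_j}|^m$, I would split the estimate into the two pieces $E_1$ and $E_2$ as before, apply H\"older's inequality in the pairing $L^{q_1(\cdot)}(w)$--$L^{q_1'(\cdot)}(w^{-1})$, and invoke Lemma~\ref{L10} to absorb the BMO oscillations at the cost of a factor $(j-k)^m\|b\|_\ast^m$. The rough kernel is handled exactly as in (\ref{ow-6}): choosing $\gamma'(\cdot)$ with $1/q_1'(\cdot)=1/s+1/\gamma'(\cdot)$, H\"older's inequality together with Lemma~\ref{tt-L2} controls $\|\Omega(x-\cdot)\chi_j\|_{L^{q_1'(\cdot)}(w^{-1})}$. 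The one genuine difference is the scale: for $x\in D_k$ and $j\ge k$ the kernel integral in Lemma~\ref{tt-L2} is governed by the \emph{larger} annulus, so it yields the factor $2^{jn/s}$ in place of the $2^{kn/s}$ of Theorem~\ref{bw-T1}.

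I would then relate the characteristic-function norms across scales via the boundedness of $I_\beta$ (Lemma~\ref{tt-L3}), Definition~\ref{dg1}, Lemma~\ref{tt-L4}, and the \emph{first} inequality of Lemma~\ref{tt-L5}, applied with $S=B_k\subset B_j=B$, which furnishes the decay $2^{n\delta_1(k-j)}$ governed by $\delta_1$. Collecting all the scaling factors, the crucial observation is that the kernel factor $2^{jn/s}$ now meets the $2^{-jn/s}$ coming from the conversion between the $\gamma'(\cdot)$- and $q_1'(\cdot)$-norms at the \emph{same} index $j$, so the two cancel; this is exactly the reason the exponent $n/s$ is absent from the hypothesis of the present theorem, in contrast with $\varepsilon_1$ in Theorem~\ref{bw-T1}. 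What survives is the single geometric factor $2^{\varepsilon_2(j-k)}$ with $\varepsilon_2=\beta-w^-\alpha^--n\delta_1$, which is strictly negative exactly under the standing assumption $w^-\alpha^--\beta+n\delta_1>0$. This reduces the per-annulus estimate to
\[
\big\|v(B_k)^{\alpha(\cdot)/n}\mathcal{H}^{\ast,b,m}_{\Omega,\beta}(f)\chi_k\big\|_{L^{q_2(\cdot)}(w)}\lesssim \|b\|_\ast^m\sum_{j=k}^{\infty}(j-k)^m\,2^{\varepsilon_2(j-k)}\,\big\|v(B_j)^{\alpha(\cdot)/n}f_j\big\|_{L^{q_1(\cdot)}(w)}.
\]

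Finally, I would substitute this into the grand Herz--Morrey quasi-norm, split the outer sum over $k$ into $k<0$ and $k\ge 0$, and in each regime apply H\"older's inequality in the discrete variable $j$ to absorb the polynomial weight $(j-k)^m$ against the exponential decay $2^{\varepsilon_2(j-k)}$, exactly as in the treatment of $F_1$, $F_2$, $F_{2,1}$, $F_{2,2}$ in Theorem~\ref{bw-T1}, with the roles of the inner and outer indices reversed. The main obstacle I anticipate is bookkeeping rather than conceptual: one must check that, with the summation direction reversed, every scaling exponent that was tied to the index $k$ in Theorem~\ref{bw-T1} is now correctly tied to $j$ and conversely, so that all factors reassemble into a function of $j-k$ alone with the correct sign of $\varepsilon_2$. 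The $n/s$-cancellation described above is the delicate point at which an error would most easily creep in.
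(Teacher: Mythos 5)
Your proposal is correct and follows essentially the same route as the paper's own proof: the reversed summation over $j\ge k+1$, the third branch of Lemma \ref{tL2} giving $2^{(k-j)w^-\alpha^-}$, the BMO splitting with Lemma \ref{L10}, the $2^{jn/s}$ kernel bound cancelling against the $2^{-jn/s}$ from the $\gamma'(\cdot)$-to-$q_1'(\cdot)$ conversion, the first inequality of Lemma \ref{tt-L5} with $\delta_1$, and the final H\"older summation split over $k<0$ and $k\ge 0$ all match the paper's argument (its pieces $G_1,G_2,H_1,H_{1,1},H_{1,2},H_2$). Your $\varepsilon_2$ is the negative of the paper's, but the resulting geometric decay $2^{-|j-k|(w^-\alpha^--\beta+n\delta_1)}$ is identical.
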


\begin{proof}
 Since bounded functions with compact support are dense in $M\dot{K}_{q_2(\cdot),\lambda}^{\alpha (\cdot),p),\theta}(v,w)$, we only consider that $f$ is a bounded functions with compact support and write
\[f=\sum^\infty_{j=-\infty}f \chi_j=:\sum^\infty_{j=-\infty}f_j.\]
By  Lemma \ref{tL2}, we have
\begin{align*}
&|v(B_k)^{\alpha(x)/n} \mathcal{H}^{\ast,b,m}_{\Omega,\beta} f(x) \chi_k(x) |\\
&\leq  |v(B_k)^{\alpha(x)/n}|  \int_{|t| \geq |x|} \frac{|b(x)-b(t)|^m |\Omega(x-t) f(t)| }{|t|^{n-\beta}} {\rm d} t \chi_k(x)\\
&\leq  v(B_k)^{\alpha(x)/n}  \int_{\rn \setminus B_k} \frac{|b(x)-b(t)|^m |\Omega(x-t) f(t)| }{|t|^{n-\beta}} {\rm d} t \chi_k(x)\\
&\leq  v(B_k)^{\alpha(x)/n} \sum_{j=k+1}^\infty \int_{D_j} \frac{|b(x)-b(t)|^m |\Omega(x-t) f(t)| }{|t|^{n-\beta}} {\rm d} t \chi_k(x)\\
& \leq  \sum_{j=k+1}^\infty 2^{(k-j)w^-\alpha^-} 2^{j(\beta-n)}  \int_{D_j}  |b(x)-b(t)|^m |v(B_j)^{\alpha(t)/n} \Omega(x-t) f(t)| {\rm d} t \chi_k(x) \\
& \les   \sum_{j=k+1}^\infty 2^{(k-j)w^-\alpha^-} 2^{j(\beta-n)} \int_{D_j}  |b(x)-b_{D_j}|^m |v(B_j)^{\alpha(t)/n} \Omega(x-t) f(t)| {\rm d} t \chi_k(x) \\
& \quad +   \sum_{j=k+1}^\infty 2^{(k-j)w^-\alpha^-} 2^{j(\beta-n)}  \int_{D_j}  |b(t)-b_{D_j}|^m |v(B_j)^{\alpha(t)/n} \Omega(x-t) f(t)| {\rm d} t \chi_k(x) \\
& =: G_1 + G_2.
\end{align*}

Estimate $G_1$. By  H\"{o}lder's inequality, we have
\begin{align}\label{ow-10}
G_1 &\les   \sum_{j=k+1}^\infty |b(x)-b_{D_j}|^m 2^{(k-j)w^-\alpha^-} 2^{j(\beta-n)} \no \\
& \quad \times \|v(B_j)^{\alpha(\cdot)/n} f_j\|_{L^{q_1(\cdot)}(w)} \|\Omega(x-\cdot)\chi_j\|_{L^{q'_1(\cdot)}(w^{-1})}  \chi_k(x) .
\end{align}
Taking the $L^{q_2(\cdot)}(w)$-norm on both sides of (\ref{ow-10}), by Lemma \ref{L10}, we have 
\begin{align}\label{ow-11}
\|G_1 \|_{L^{q_2(\cdot)}(w)}
& \les    \sum_{j=k+1}^\infty 2^{(k-j)w^-\alpha^-} 2^{j(\beta-n)} \| |b(x)-b_{D_j}|^m \chi_k\|_{L^{q_2(\cdot)}(w)}  \no\\
& \quad \times \|v(B_j)^{\alpha(\cdot)/n} f_j\|_{L^{q_1(\cdot)}(w)}  \|\Omega(x-\cdot)\chi_j\|_{L^{q'_1(\cdot)}(w^{-1})}  \no \\
& \les \|b\|^m_\ast   \sum_{j=k+1}^\infty (k-j)^m 2^{(k-j)w^-\alpha^-} 2^{j(\beta-n)} \no \\
& \quad \times \|v(B_j)^{\alpha(\cdot)/n} f_j\|_{L^{q_1(\cdot)}(w)}  \|\Omega(x-\cdot)\chi_j\|_{L^{q'_1(\cdot)}(w^{-1})} \| \chi_k\|_{L^{q_2(\cdot)}(w)}.
\end{align}

Estimate $G_2$. by H\"{o}lder's inequality, we have
\begin{align}\label{ow-12}
G_2 &\les   \sum_{j=k+1}^\infty 2^{(k-j)w^-\alpha^-} 2^{j(\beta-n)} \|v(B_j)^{\alpha(\cdot)/n} f_j\|_{L^{q_1(\cdot)}(w)} \no \\
& \quad \times  \| |b(x)-b_{D_j}|^m \Omega(x-\cdot)\chi_j\|_{L^{q'_1(\cdot)}(w^{-1})}  \chi_k(x) .
\end{align}
Taking the $L^{q_2(\cdot)}(w)$-norm on both sides of (\ref{ow-12}), by Lemma \ref{L10},we have
\begin{align}\label{ow-13}
\|G_2 \|_{L^{q_2(\cdot)}(w)} &\les   \sum_{j=k+1}^\infty 2^{(k-j)w^-\alpha^-} 2^{j(\beta-n)} \|v(B_j)^{\alpha(\cdot)/n} f_j\|_{L^{q_1(\cdot)}(w)} \no \\
& \quad \times  \| |b(x)-b_{D_j}|^m \Omega(x-\cdot)\chi_j\|_{L^{q'_1(\cdot)}(w^{-1})}  \|\chi_k(x) \|_{L^{q_2(\cdot)}(w)} \no\\
& \les \|b\|^m_\ast   \sum_{j=k+1}^\infty  2^{(k-j)w^-\alpha^-} 2^{j(\beta-n)} \|v(B_j)^{\alpha(\cdot)/n} f_j\|_{L^{q_1(\cdot)}(w)} \no\\
& \quad \times  \| \Omega(x-\cdot)\chi_j\|_{L^{q'_1(\cdot)}(w^{-1})}  \|\chi_k(x) \|_{L^{q_2(\cdot)}(w)}.
\end{align}
Thus, by (\ref{ow-11}) and (\ref{ow-13}), we obtain
\begin{align}\label{ow-14}
&\|v(B_k)^{\alpha(\cdot)/n} \mathcal{H}^{\ast,b,m}_{\Omega,\beta} (f) \chi_k \|_{L^{q_2(\cdot)}(w)} \no \\
&\quad  \les  \|b\|_\ast^m  \sum_{j=k+1}^\infty (k-j)^m  2^{(k-j)w^-\alpha^-} 2^{j(\beta-n)}  \no \\
& \quad \quad \times \|v(B_j)^{\alpha(\cdot)/n} f_j\|_{L^{q_1(\cdot)}(w)} \| \Omega(x-\cdot)\chi_j\|_{L^{q'_1(\cdot)}(w^{-1})}  \|\chi_k(x) \|_{L^{q_2(\cdot)}(w)}.
\end{align}
Note that $\|\chi_k\|_{L^{q_2(\cdot)}(w)} \leq \|\chi_{B_k}\|_{L^{q_2(\cdot)}(w)}$ and $\chi_{B_k} \les 2^{-k\beta} I_{\beta}(\chi_{B_k})$.  Thus, by Lemma \ref{tt-L3}, we have
\begin{align}\label{ow-15}
\|\chi_k\|_{L^{q_2(\cdot)}(w)}
& \les 2^{-k\beta} \|I_{\beta}(\chi_{B_k})\|_{L^{q_2(\cdot)}(w)} \les 2^{-k\beta} \|\chi_{B_k}\|_{L^{q_1(\cdot)}(w)} \no \\
&\les 2^{-k\beta} \|\chi_k\|_{L^{q_1(\cdot)}(w)}  .
\end{align}
Since $s>q_1^{\prime}(\cdot)$, there exists $\gamma^{\prime}(\cdot)$ such that $1/q^{\prime}_1(\cdot)=1/s + 1/\gamma^{\prime}(\cdot)$. Then by  H\"{o}lder's inequality and Lemma \ref{tt-L2}, we obtain
\begin{equation}\label{ow-18}
\|\Omega(x-\cdot)\chi_j\|_{L^{q'_1(\cdot)}(w^{-1})} \les \|\Omega(x-\cdot)\|_{L^s} \|\chi_j\|_{L^{\gamma'(\cdot)}(w^{-1})} \les 2^{jn/s} \|\chi_j\|_{L^{\gamma'(\cdot)}(w^{-1})}.
\end{equation}
By Lemmas \ref{tt-L4} and \ref{tt-L5}, Definition \ref{dg1}, (\ref{ow-18})  and (\ref{ow-15}), we have
\begin{align}\label{ow-16}
& 2^{(k-j)w^-\alpha^-} 2^{j(\beta-n)}  \|\chi_j\|_{L^{\gamma'(\cdot)}(w^{-1})} \|\chi_k\|_{L^{q_2(\cdot)}(w)} \no\\
& \quad \les 2^{(k-j)w^-\alpha^-} 2^{j(\beta-n)}   \|\chi_j\|_{L^{q'_1(\cdot)}(w^{-1})} \|\chi_k\|_{L^{q_2(\cdot)}(w)} \no\\
& \quad \les 2^{(k-j)w^-\alpha^-} 2^{j\beta}  \|\chi_j\|_{L^{q_1(\cdot)}(w)}^{-1} \|\chi_k\|_{L^{q_2(\cdot)}(w)} \no\\
& \quad \les 2^{(k-j)w^-\alpha^-} 2^{j\beta}  2^{n\delta_1(k-j)}  \|\chi_k\|_{L^{q_1(\cdot)}(w)}^{-1} \|\chi_k\|_{L^{q_2(\cdot)}(w)} \no\\
& \quad \les 2^{(k-j)w^-\alpha^-} 2^{j\beta}  2^{n\delta_1(k-j)} 2^{-k\beta} \|\chi_k\|_{L^{q_1(\cdot)}(w)}^{-1} \|\chi_k\|_{L^{q_1(\cdot)}(w)} \no\\
& \quad \les 2^{(k-j)\varepsilon_2},
\end{align}
where $\varepsilon_2=w^-\alpha^- -\beta+n\delta_1$.
Thus, by (\ref{ow-14}), (\ref{ow-6}) (\ref{ow-15}) and (\ref{ow-16}), we obtain
\begin{equation*}
\|v(B_k)^{\alpha(\cdot)/n} \mathcal{H}^{\ast,b}_{\Omega,\beta} (f) \chi_k \|_{L^{q_2(\cdot)}(w)} \les \|b\|^m_\ast  \sum_{j=k+1}^\infty (k-j)^m \|v(B_j)^{\alpha(\cdot)/n} f_j\|_{L^{q_1(\cdot)}(w)} 2^{\varepsilon_2(k-j)} .
\end{equation*}
By Lemma \ref{tL4}, we have
\begin{align*}
&\|\mathcal{H}^{\ast,b,m}_{\Omega,\beta} f\|_{M\dot{K}_{q_2(\cdot),\lambda}^{\alpha (\cdot),p),\theta}(v,w)}\\
& \quad \les \sup_{\delta>0} \sup_{k_0 \in \mathbb{Z}} 2^{-k_0\lambda} \bigg( \delta^\theta \sum_{k=-\infty}^{-1} \|v(B_k)^{\alpha(\cdot)/n} \mathcal{H}^{\ast,b,m}_{\Omega,\beta} f \chi_k \|_{L^{q_2(\cdot)}(w)}^{p(1+\delta)} \bigg)^{\frac{1}{p(1+\delta)}} \\
&  \quad \quad + \sup_{\delta>0} \sup_{k_0 \in \mathbb{Z}} 2^{-k_0\lambda} \bigg( \delta^\theta \sum_{k=0}^{k_0} \|v(B_k)^{\alpha(\cdot)/n} \mathcal{H}^{\ast,b,m}_{\Omega,\beta} f \chi_k \|_{L^{q_2(\cdot)}(w)}^{p(1+\delta)} \bigg)^{\frac{1}{p(1+\delta)}} \\
&=: H_1+H_2.
\end{align*}
Estimate $H_1$.
 \begin{align*}
H_1 &\les \|b\|_\ast^m \sup_{\delta>0} \sup_{k_0 \in \mathbb{Z}} 2^{-k_0\lambda} \bigg( \delta^\theta \sum_{k=-\infty}^{-1} \bigg(  \sum_{j=k+1}^{-1} \|v(B_j)^{\alpha(\cdot)/n} f_j\|_{L^{q_1(\cdot)}(w)}\\
& \quad \times  (k-j)^m 2^{\varepsilon_2(k-j)} \bigg)^{p(1+\delta)} \bigg)^{\frac{1}{p(1+\delta)}} \\
& \quad + \|b\|_\ast^m \sup_{\delta>0} \sup_{k_0 \in \mathbb{Z}} 2^{-k_0\lambda} \bigg( \delta^\theta \sum_{k=-\infty}^{-1} \bigg(  \sum_{j=0}^\infty \|v(B_j)^{\alpha(\cdot)/n} f_j\|_{L^{q_1(\cdot)}(w)} \\
& \quad \quad \times (k-j)^m 2^{\varepsilon_2(k-j)} \bigg)^{p(1+\delta)} \bigg)^{\frac{1}{p(1+\delta)}}\\
&=: H_{1,1}+H_{1,2}.
\end{align*}

Estimate $H_{1,1}$. Since $\varepsilon_2>0$, by H\"{o}lder's inequality, we have
 \begin{align*}
H_{1,1} &  \les \|b\|_\ast^m \sup_{\delta>0} \sup_{k_0 \in \mathbb{Z}} 2^{-k_0\lambda} \bigg( \delta^\theta \sum_{k=-\infty}^{-1} \bigg(  \sum_{j=k+1}^{-1} 2^{\varepsilon_2(k-j)p(1+\delta)/2} \|v(B_j)^{\alpha(\cdot)/n} f_j\|_{L^{q_1(\cdot)}(w)}^{p(1+\delta)} \bigg) \\
& \quad \times   \bigg( \sum_{j=k+1}^{-1} (k-j)^{m(p(1+\delta))^{\prime}} 2^{\varepsilon_2(k-j)(p(1+\delta))^{\prime}/2} \bigg)^{\frac{p(1+\delta)}{(p(1+\delta))^{\prime}}} \bigg)^{\frac{1}{p(1+\delta)}}\\
& \les   \|b\|_\ast^m \sup_{\delta>0} \sup_{k_0 \in \mathbb{Z}} 2^{-k_0\lambda}  \bigg( \sum_{j=-\infty}^{-1} \|v(B_j)^{\alpha(\cdot)/n} f_j\|_{L^{q_1(\cdot)}(w)}^{p(1+\delta)} \bigg)^{\frac{1}{p(1+\delta)}} \\
& \les \|b\|_\ast^m \| f\|_{M\dot{K}_{q_1(\cdot),\lambda}^{\alpha (\cdot),p),\theta}(v,w)}.
\end{align*}

Estimate $H_{1,2}$. Since $\varepsilon_2>0$, by  H\"{o}lder's inequality, we have
 \begin{align*}
H_{1,2} & \les \|b\|_\ast^m \sup_{\delta>0} \sup_{k_0 \in \mathbb{Z}} 2^{-k_0\lambda} \bigg( \delta^\theta \sum_{k=-\infty}^{-1} 2^{\varepsilon_2 kp(1+\delta)}   \\
& \quad \times \bigg(   \sum_{j=0}^\infty (k-j)^m 2^{-\varepsilon_1 j}  \|v(B_j)^{\alpha(\cdot)/n} f_j\|_{L^{q_1(\cdot)}(w)} \bigg)^{p(1+\delta)} \bigg)^{\frac{1}{p(1+\delta)}} \\
&\les \|b\|_\ast^m \sup_{\delta>0} \sup_{k_0 \in \mathbb{Z}} 2^{-k_0\lambda}  \bigg( \delta^\theta  \sum_{k=-\infty}^{-1} \bigg( \sum_{j=0}^\infty \|v(B_j)^{\alpha(\cdot)/n} f_j\|_{L^{q_1(\cdot)}(w)}^{p(1+\delta)} \bigg) \\
& \quad \times \bigg(   \sum_{j=0}^\infty (k-j)^{m(p(1+\delta))^{\prime}} 2^{-\varepsilon_1 j (p(1+\delta))^{\prime}}   \bigg)^{\frac{p(1+\delta)}{(p(1+\delta))^{\prime}}} \bigg)^{\frac{1}{p(1+\delta)}}  \\
& \les \|b\|_\ast^m \sup_{\delta>0} \sup_{k_0 \in \mathbb{Z}} 2^{-k_0\lambda} \bigg( \delta^\theta \sum_{j=-\infty}^{-1}  \|v(B_j)^{\alpha(\cdot)/n} f_j\|_{L^{q_1(\cdot)}(w)}^{p(1+\delta)} \bigg)^{\frac{1}{p(1+\delta)}}  \\
&\les \|b\|_\ast^m \| f\|_{M\dot{K}_{q_1(\cdot),\lambda}^{\alpha (\cdot),p),\theta}(v,w)} .
\end{align*}

Since the estimatation of  $H_2$ is similar to that of $H_{1,1}$, we have
\[ H_2 \les \|b\|_\ast^m \| f\|_{M\dot{K}_{q_1(\cdot),\lambda}^{\alpha (\cdot),p),\theta}(v,w)}.\]
Combining the estimates of $G$ and $H$, we get
\[\|\mathcal{H}^{\ast,b,m}_{\Omega,\beta} f\|_{M\dot{K}_{q_2(\cdot),\lambda}^{\alpha (\cdot),p),\theta}(v,w)} \les \|b\|_\ast^m \| f\|_{M\dot{K}_{q_1(\cdot),\lambda}^{\alpha (\cdot),p),\theta}(v,w)}.\]
This completes the proof.
\end{proof}

\section{Commutators by the Hardy operator and Lipschitz functions}\label{wbl-s-3}
In this section, we show that the boundedness of commutators generated by the $n$-dimensional fractional Hardy operator with rough kernel and its adjoint operator and Lipschitz functions  on two weighted grand Herz-Morrey spaces with variable exponents. 

Let $0 < \sigma \leq 1$ and $f \in L^{1}_{\rm loc}(\rn)$. The Lipschitz space ${\rm Lip}^\sigma(\rn)$ is defined as the set of all functions $f : \rn \rightarrow \cc$ such that
\[ \|f\|_{{\rm Lip}^\sigma} := \sup_{x,y \in \rn,x\neq y} \frac{|f(x)-f(y)|}{|x-y|^\sigma}.\]

\begin{thm}
Let $0<\beta<n/q^+_1$,  $\theta \in (0,\infty)$, $\lambda \in [0, \infty)$, $0<\sigma<1$, $b \in {\rm Lip}^\sigma(\rn)$, $\alpha(\cdot) \in L^\infty(\rn)$,  $1<p<\infty$,  $q_2(\cdot)\in C^{\log}(\rn) \cap \cp(\rn)$ and $q_1(\cdot)$ satisfying $1/q_2(\cdot)=1/q_1(\cdot) - \beta/n$.    
Let $v\in A_r$ for some $r \in [1,\infty)$, $w \in A_{q_2(\cdot)}$, $\Omega \in L^s(S^{n-1})$ with $s>q^{\prime}_1(\cdot)$ and $w^+\alpha^+ + \beta+m\sigma +n/s-n\delta_2<0$,  where  $\delta_2 \in (0,1)$ is the constant in Lemma \ref{tt-L5} for $q(\cdot)$. 
Suppose that  $\alpha(\cdot)$ is log-H\"older continuous at infinity and at the origin. Then
\[\|\mathcal{H}^{b,m}_{\Omega,\beta} f\|_{M\dot{K}_{q_2(\cdot),\lambda}^{\alpha (\cdot),p),\theta}(v,w)} \les  \|b\|^m_{{\rm Lip}^\sigma} \| f\|_{M\dot{K}_{q_1(\cdot),\lambda}^{\alpha (\cdot),p),\theta}(v,w)}.\]
\end{thm}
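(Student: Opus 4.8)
The plan is to mirror the BMO argument of Theorem \ref{bw-T1} almost verbatim, replacing the BMO oscillation estimate (Lemma \ref{L10}) by the corresponding Lipschitz estimate. The only genuinely new input is how the Lipschitz seminorm controls the oscillation factors $|b(x)-b_{D_j}|^m$ and $|b(t)-b_{D_j}|^m$ appearing in the two pieces $E_1$ and $E_2$ after the splitting $b(x)-b(t)=(b(x)-b_{D_j})+(b_{D_j}-b(t))$. Since $b\in{\rm Lip}^\sigma(\rn)$, for $x\in D_k$ and $t\in D_j$ with $j\le k$ one has $|b(x)-b(t)|\le \|b\|_{{\rm Lip}^\sigma}|x-t|^\sigma\lesssim \|b\|_{{\rm Lip}^\sigma}2^{k\sigma}$, and more usefully the localized average oscillation obeys $\| |b-b_{D_j}|^m\chi_{B_k}\|_{L^{q(\cdot)}(w)}\lesssim \|b\|_{{\rm Lip}^\sigma}^m 2^{k m\sigma}\|\chi_{B_k}\|_{L^{q(\cdot)}(w)}$. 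This is the analogue of Lemma \ref{L10} but with the logarithmic factor $(k-i)^m$ upgraded to the geometric factor $2^{km\sigma}$; I would state and use such an estimate in place of Lemma \ref{L10}.

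First I would reproduce the pointwise decomposition from the proof of Theorem \ref{bw-T1}: expand $f=\sum_j f_j$, apply Lemma \ref{tL2} to extract the factor $2^{(k-j)w^+\alpha^+}$, split via the triangle inequality into $E_1$ (with $|b(x)-b_{D_j}|^m$) and $E_2$ (with $|b(t)-b_{D_j}|^m$), and apply H\"older's inequality in $L^{q_1(\cdot)}(w)\cdot L^{q_1'(\cdot)}(w^{-1})$ exactly as in \eqref{ow-1} and \eqref{ow-3}. Next I would take the $L^{q_2(\cdot)}(w)$-norm and apply the Lipschitz oscillation bound: in $E_1$ the factor $\| |b(x)-b_{D_j}|^m\chi_k\|_{L^{q_2(\cdot)}(w)}$ becomes $\|b\|_{{\rm Lip}^\sigma}^m 2^{km\sigma}\|\chi_k\|_{L^{q_2(\cdot)}(w)}$, and in $E_2$ the oscillation rides on the $L^{q_1'(\cdot)}(w^{-1})$ side together with $\Omega(x-\cdot)\chi_j$, again producing $\|b\|_{{\rm Lip}^\sigma}^m 2^{jm\sigma}$. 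Since $j\le k$ we may bound $2^{jm\sigma}\le 2^{km\sigma}$, so both pieces carry the common geometric weight $2^{km\sigma}$, yielding the analogue of \eqref{ow-5} with $(k-j)^m$ replaced by $2^{km\sigma}\|b\|_{{\rm Lip}^\sigma}^m$.

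The remaining work is the norm-comparison chain \eqref{ow-6}--\eqref{ow-8}. The estimates \eqref{ow-6} (splitting off $\Omega$ via $1/q_1'(\cdot)=1/s+1/\gamma'(\cdot)$ and Lemma \ref{tt-L2}) and \eqref{ow-7} (using $\chi_{B_j}\lesssim 2^{-j\beta}I_\beta(\chi_{B_j})$, Lemma \ref{tt-L3}, and Definition \ref{dg1}) are unchanged. In the key summability step \eqref{ow-8} I would absorb the extra factor $2^{km\sigma}$ into the exponent, so that the geometric decay rate becomes $\varepsilon_1'=w^+\alpha^++\beta+m\sigma+n/s-n\delta_2$; the hypothesis $w^+\alpha^++\beta+m\sigma+n/s-n\delta_2<0$ guarantees $\varepsilon_1'<0$, exactly the sign needed. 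With $\varepsilon_1'<0$ in hand, the outer estimates $F_1,F_2$ (split into $F_{2,1},F_{2,2}$) go through verbatim as in Theorem \ref{bw-T1}, the only change being that the polynomial factors $(k-j)^m$ are now absent, which only simplifies the geometric-series summations carried out by H\"older's inequality against $\bigl(\sum_{j}2^{\varepsilon_1'(k-j)(p(1+\delta))'/2}\bigr)$.

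The main obstacle is establishing the correct Lipschitz oscillation estimate and tracking the resulting scale factor $2^{km\sigma}$ consistently through both $E_1$ and $E_2$ so that it lands cleanly in the exponent $\varepsilon_1'$. One must be careful that in $E_2$ the oscillation sits at scale $j$ rather than $k$; since $j\le k$ this is harmless after bounding $2^{jm\sigma}\le 2^{km\sigma}$, but the reverse inequality would fail, so the direction of the comparison is what the hypothesis on $\varepsilon_1'$ is tuned to. Beyond this, everything reduces to the already-proved Theorem \ref{bw-T1} machinery, and I expect no further difficulty; the decay $\varepsilon_1'<0$ drives the double sum over $(k,j)$ to converge and delivers the desired bound $\|b\|_{{\rm Lip}^\sigma}^m\|f\|_{M\dot{K}_{q_1(\cdot),\lambda}^{\alpha(\cdot),p),\theta}(v,w)}$.
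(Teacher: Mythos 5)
There is a genuine gap at the summability step, and it is exactly the point where your proposal diverges from what is needed. You keep \eqref{ow-6} and \eqref{ow-7} ``unchanged'' and claim that the oscillation factor $2^{km\sigma}$ can be ``absorbed into the exponent'' so that the decay rate becomes $\varepsilon_1'=w^+\alpha^++\beta+m\sigma+n/s-n\delta_2$. This is not valid arithmetic: with \eqref{ow-7} unchanged, the chain \eqref{ow-6}--\eqref{ow-8} produces $2^{\varepsilon_1(k-j)}$ with $\varepsilon_1=w^+\alpha^++\beta+n/s-n\delta_2$, and multiplying by your factor gives
\begin{equation*}
2^{km\sigma}\,2^{\varepsilon_1(k-j)}=2^{(\varepsilon_1+m\sigma)(k-j)}\,2^{jm\sigma},
\end{equation*}
i.e.\ the claimed rate $\varepsilon_1'(k-j)$ times a leftover factor $2^{jm\sigma}$ anchored at scale $j$, not at the gap $k-j$. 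For $j\le -1$ (the analogues of $F_1$ and $F_{2,1}$) this leftover is at most $1$ and harmless, but in the analogue of $F_{2,2}$, where $0\le j\le k$, the factor $2^{jm\sigma}$ grows geometrically: already the diagonal term $j=k$ carries the unbounded coefficient $2^{km\sigma}$ in front of $\|v(B_k)^{\alpha(\cdot)/n}f_k\|_{L^{q_1(\cdot)}(w)}$, and no H\"older/geometric-series argument against $\|f\|_{M\dot{K}_{q_1(\cdot),\lambda}^{\alpha(\cdot),p),\theta}(v,w)}$ can absorb it (the $2^{-k_0\lambda}$ scaling only compensates powers already built into the norm). The hypothesis $\varepsilon_1'<0$ does not rescue this term.

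The missing idea --- and it is precisely what the paper does --- is that the compensating factor $2^{-jm\sigma}$ must be extracted from the characteristic-function estimate itself: replace \eqref{ow-7} by its order-$(\beta+m\sigma)$ analogue, writing $\chi_{B_j}\lesssim 2^{-j(\beta+m\sigma)}I_{\beta+m\sigma}(\chi_{B_j})$ and invoking the Lemma \ref{tt-L3}-type boundedness for $I_{\beta+m\sigma}$, which yields $\|\chi_j\|_{L^{q_2(\cdot)}(w)}\lesssim 2^{j(n-\beta-m\sigma)}\|\chi_j\|^{-1}_{L^{q_1'(\cdot)}(w^{-1})}$; this is \eqref{lw-1} in the paper. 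With that extra $2^{-jm\sigma}$, the chain \eqref{lw-2} genuinely closes to $2^{\varepsilon_3(k-j)}$ with $\varepsilon_3=w^+\alpha^++\beta+m\sigma+n/s-n\delta_2<0$, and then the outer estimates $F_3,F_4$ do go through verbatim as in Theorem \ref{bw-T1}, as you outlined. A secondary remark: your splitting $b(x)-b(t)=(b(x)-b_{D_j})+(b_{D_j}-b(t))$ is unnecessary in the Lipschitz setting, since for $x\in D_k$ and $t\in B_k$ the definition of ${\rm Lip}^\sigma$ gives the direct pointwise bound $|b(x)-b(t)|^m\lesssim\|b\|^m_{{\rm Lip}^\sigma}2^{km\sigma}$, which is what the paper uses; the splitting is harmless but buys nothing, and your oscillation lemma for it is correct --- the defect lies solely in leaving \eqref{ow-7} untouched.
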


\begin{proof}
 Since bounded functions with compact support are dense in $M\dot{K}_{q_2(\cdot),\lambda}^{\alpha (\cdot),p),\theta}(v,w)$, we only consider that $f$ is a bounded functions with compact support and write
\[f=\sum^\infty_{j=-\infty}f \chi_j=:\sum^\infty_{j=-\infty}f_j.\]
By  H\"{o}lder's inequality, Lemmas \ref{tL2} and \ref{tt-L2}, we have
\begin{align*}
&|v(B_k)^{\alpha(x)/n} \mathcal{H}^{b,m}_{\Omega,\beta} f(x) \chi_k(x)| \\
&\leq  v(B_k)^{\alpha(x)/n} \frac{1}{|x|^{n-\beta}} \int_{B_k} |b(x)-b(t)|^m |\Omega(x-t) f(t)| {\rm d} t \chi_k(x)\\
&\leq  v(B_k)^{\alpha(x)/n} \frac{1}{|x|^{n-\beta}} \sum_{j=-\infty}^k \int_{D_j} |b(x)-b(t)|^m |\Omega(x-t) f(t)| {\rm d} t \chi_k(x)\\
& \leq 2^{(k-j)w^+\alpha^+} 2^{k(\beta+m\sigma-n)}  \|b\|^m_{{\rm Lip}^\sigma}  \sum_{j=-\infty}^k \int_{D_j} |v(B_j)^{\alpha(t)/n} \Omega(x-t) f(t)| {\rm d} t \chi_k(x)\\
& \leq 2^{(k-j)w^+\alpha^+} 2^{k(\beta+m\sigma-n)} \|b\|^m_{{\rm Lip}^\sigma}  \sum_{j=-\infty}^k \|v(B_j)^{\alpha(\cdot)/n} f_j\|_{L^{q_1(\cdot)}(w)} \|\Omega(x-\cdot)\chi_j\|_{L^{q'_1(\cdot)}(w^{-1})}  \chi_k(x)
\end{align*}
Taking the $L^{q_2(\cdot)}(w)$-norm on both sides of the above inequality, we have
\begin{align*}
\|v(B_k)^{\alpha(\cdot)/n} \mathcal{H}^{b,m}_{\Omega,\beta} (f) \chi_k \|_{L^{q_2(\cdot)}(w)} 
& \les 2^{(k-j)w^+\alpha^+} 2^{k(\beta+m\sigma-n)} \|b\|^m_{{\rm Lip}^\sigma}  \sum_{j=-\infty}^k \|\chi_k\|_{L^{q_2(\cdot)}(w)} \\
&\quad \times  \|\Omega(x-\cdot)\chi_j\|_{L^{q'_1(\cdot)}(w^{-1})}  . 
\end{align*}
Note that $\|\chi_j\|_{L^{q_2(\cdot)}(w)} \leq \|\chi_{B_j}\|_{L^{q_2(\cdot)}(w)}$ and $\chi_{B_j} \les 2^{-j(\beta+m\sigma)} I_{\beta+m\sigma}(\chi_{B_j})$.  Thus, by Lemma \ref{tt-L3} and Definition \ref{dg1}, we obtain
\begin{align}\label{lw-1}
\|\chi_j\|_{L^{q_2(\cdot)}(w)}
& \les 2^{-j(\beta+m\sigma)} \|I_{\beta+m\sigma}(\chi_{B_j})\|_{L^{q_2(\cdot)}(w)} \les 2^{-j(\beta+m\sigma)} \|\chi_{B_j}\|_{L^{q_1(\cdot)}(w)} \no \\
&\les 2^{-j(\beta+m\sigma )} \|\chi_j\|_{L^{q_1(\cdot)}(w)}  \les 2^{j(n-\beta-m\sigma )} \|\chi_j\|^{-1}_{L^{q'_1(\cdot)}(w^{-1})}.
\end{align}
Since $s>q_1^{\prime}(\cdot)$, there exists $\gamma^{\prime}(\cdot)$ such that $1/q^{\prime}_1(\cdot)=1/s + 1/\gamma^{\prime}(\cdot)$. 
Then by (\ref{ow-6}), Lemma \ref{tt-L4},  Definition \ref{dg1}, Lemma \ref{tt-L5}  and (\ref{lw-1}), we have
\begin{align}\label{lw-2}
& 2^{(k-j)w^+\alpha^+} 2^{k(\beta+m\sigma -n)} \|\Omega(x-\cdot)\chi_j\|_{L^{q'_1(\cdot)}(w^{-1})} \|\chi_k\|_{L^{q_2(\cdot)}(w)} \no\\
&\quad  \les 2^{(k-j)w^+\alpha^+} 2^{k(\beta+m\sigma -n+n/s)} \|\chi_j\|_{L^{\gamma'(\cdot)}(w^{-1})} \|\chi_k\|_{L^{q_2(\cdot)}(w)} \no\\
& \quad \les 2^{(k-j)w^+\alpha^+} 2^{k(\beta+m\sigma -n+n/s)} 2^{-jn/s} \|\chi_j\|_{L^{q'_1(\cdot)}(w^{-1})} \|\chi_k\|_{L^{q_2(\cdot)}(w)} \no\\
&\quad \les 2^{(k-j)w^+\alpha^+} 2^{k(\beta+m\sigma +n/s)} 2^{-jn/s} \|\chi_j\|_{L^{q'_1(\cdot)}(w^{-1})} \|\chi_k\|^{-1}_{L^{q'_2(\cdot)}(w^{-1})} \no\\
&\quad \les  2^{(k-j)w^+\alpha^+} 2^{k(\beta+m\sigma +n/s)} 2^{n\delta_2(j-k)} 2^{-jn/s} \|\chi_j\|_{L^{q'_1(\cdot)}(w^{-1})} \|\chi_j\|^{-1}_{L^{q'_2(\cdot)}(w^{-1})} \no\\
&\quad \les  2^{(k-j)w^+\alpha^+} 2^{k(\beta+m\sigma +n/s)} 2^{n\delta_2(j-k)} 2^{j(n-\beta-m\sigma )} 2^{-jn/s} \|\chi_j\|^{-1}_{L^{q_2(\cdot)}(w)} \|\chi_j\|^{-1}_{L^{q'_2(\cdot)}(w^{-1})} \no\\
&\quad \les 2^{\varepsilon_3(k-j)},
\end{align}
where $\varepsilon_3=w^+\alpha^+ + \beta+m\sigma +n/s-n\delta_2$.
Thus, by (\ref{lw-1}) and (\ref{lw-2}), we get
\[\|v(B_k)^{\alpha(\cdot)/n} \mathcal{H}^{b,m}_{\Omega,\beta} (f) \chi_k \|_{L^{q_2(\cdot)}(w)} \les \|b\|^m_{{\rm Lip}^\sigma}  \sum_{j=-\infty}^k \|v(B_j)^{\alpha(\cdot)/n} f_j\|_{L^{q_1(\cdot)}(w)} 2^{\varepsilon_3(k-j)} .\]
Therefore, by Lemma \ref{tL4}, we have
\begin{align*}
\|\mathcal{H}^{b,m}_{\Omega,\beta} f\|_{M\dot{K}_{q_2(\cdot),\lambda}^{\alpha (\cdot),p),\theta}(v,w)}
& \les \sup_{\delta>0} \sup_{k_0 \in \mathbb{Z}} 2^{-k_0\lambda} \bigg( \delta^\theta \sum_{k=-\infty}^{-1} \|v(B_k)^{\alpha(\cdot)/n} \mathcal{H}^{b,m}_{\Omega,\beta} f \chi_k \|_{L^{q_2(\cdot)}(w)}^{p(1+\delta)} \bigg)^{\frac{1}{p(1+\delta)}} \\
&  \quad + \sup_{\delta>0} \sup_{k_0 \in \mathbb{Z}} 2^{-k_0\lambda} \bigg( \delta^\theta \sum_{k=0}^{k_0} \|v(B_k)^{\alpha(\cdot)/n} \mathcal{H}^{b,m}_{\Omega,\beta} f \chi_k \|_{L^{q_2(\cdot)}(w)}^{p(1+\delta)} \bigg)^{\frac{1}{p(1+\delta)}} \\
&=: F_3+F_4.
\end{align*}
Estimating $F_3$ and $F_4$ is the same as estimating $F_1$ and $F_2$ in Theorem \ref{bw-T1}, respectively. Notice that we used the fact that $\varepsilon_3<0$. Thus, we obtain
\[\|\mathcal{H}^{b,m}_{\Omega,\beta} f\|_{M\dot{K}_{q_2(\cdot),\lambda}^{\alpha (\cdot),p),\theta}(v,w)} \les  \|b\|^m_{{\rm Lip}^\sigma} \| f\|_{M\dot{K}_{q_1(\cdot),\lambda}^{\alpha (\cdot),p),\theta}(v,w)}.\]
This completes the proof.
\end{proof}

\begin{thm}
Let $0<\beta<n/q^+_1$,  $\theta \in (0,\infty)$, $\lambda \in [0, \infty)$, $0<\sigma<1$, $b \in {\rm Lip}^\sigma(\rn)$, $\alpha(\cdot) \in L^\infty(\rn)$,  $1<p<\infty$,  $q_2(\cdot)\in C^{\log}(\rn) \cap \cp(\rn)$ and $q_1(\cdot)$ satisfying $1/q_2(\cdot)=1/q_1(\cdot) - \beta/n$.    
Let $v\in A_r$ for some $r \in [1,\infty)$, $w \in A_{q_2(\cdot)}$, $\Omega \in L^s(S^{n-1})$ with $s>q^{\prime}_1(\cdot)$ and $w^-\alpha^- -\beta-m\sigma +n\delta_1>0$,  where  $\delta_1 \in (0,1)$ is the constant in Lemma \ref{tt-L5} for $q(\cdot)$. 
Suppose that  $\alpha(\cdot)$ is log-H\"older continuous at infinity and at the origin. Then
\[\|\mathcal{H}^{\ast,b,m}_{\Omega,\beta} f\|_{M\dot{K}_{q_2(\cdot),\lambda}^{\alpha (\cdot),p),\theta}(v,w)} \les \|b\|^m_{{\rm Lip}^\sigma} \| f\|_{M\dot{K}_{q_1(\cdot),\lambda}^{\alpha (\cdot),p),\theta}(v,w)}.\]
\end{thm}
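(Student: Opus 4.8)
The plan is to run the argument of Theorem \ref{bw-T2} for the adjoint operator, but to replace the two--term BMO splitting and the appeal to Lemma \ref{L10} by the direct Lipschitz bound used in the preceding theorem. By density I may take $f$ bounded with compact support and decompose $f=\sum_j f_j$ with $f_j=f\chi_j$. Fixing $x\in D_k$, the adjoint operator forces the sum over $j\geq k+1$, so $t\in D_j$ with $j>k$ and hence $|x-t|\leq|x|+|t|\lesssim 2^j$. Consequently
\[|b(x)-b(t)|^m\leq\|b\|^m_{{\rm Lip}^\sigma}|x-t|^{m\sigma}\lesssim\|b\|^m_{{\rm Lip}^\sigma}2^{jm\sigma},\]
which is the only way $b$ enters; unlike the BMO case there is no polynomial factor $(k-j)^m$ to carry.

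Applying Lemma \ref{tL2} to pass from $v(B_k)^{\alpha(x)/n}$ to $v(B_j)^{\alpha(t)/n}$ produces the factor $2^{(k-j)w^-\alpha^-}$, while the kernel $|t|^{\beta-n}$ on $D_j$ contributes $2^{j(\beta-n)}$; together with the Lipschitz power this yields the combined weight $2^{(k-j)w^-\alpha^-}2^{j(\beta+m\sigma-n)}$ in place of the $2^{j(\beta-n)}$ of Theorem \ref{bw-T2}. After Hölder's inequality in $L^{q_2(\cdot)}(w)$ and the kernel estimate \eqref{ow-18} (which supplies $2^{jn/s}$), the task is to reproduce the analogue of \eqref{ow-16}.

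The shift $\beta\mapsto\beta+m\sigma$ is absorbed by using, in the estimate for $\|\chi_k\|_{L^{q_2(\cdot)}(w)}$, the pointwise bound $\chi_{B_k}\lesssim 2^{-k(\beta+m\sigma)}I_{\beta+m\sigma}(\chi_{B_k})$ together with Lemma \ref{tt-L3}, exactly in the form of \eqref{lw-1} but with the index $k$ in place of $j$. Chaining this with the duality relation of Definition \ref{dg1}, Lemma \ref{tt-L5} (the $\delta_1$ estimate), and Lemma \ref{tt-L4} as in \eqref{ow-16}, every power of $2^{j\beta}$ there is promoted to $2^{j(\beta+m\sigma)}$ and each $2^{-k\beta}$ to $2^{-k(\beta+m\sigma)}$, so the telescoping collapses to the geometric factor $2^{\varepsilon_4(k-j)}$ with
\[\varepsilon_4=w^-\alpha^--\beta-m\sigma+n\delta_1,\]
which is positive by hypothesis; this is precisely the sign needed for convergence of $\sum_{j>k}2^{\varepsilon_4(k-j)}$.

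From here the passage to the grand Herz--Morrey norm is identical to Theorem \ref{bw-T2}: split $\sum_k$ into $k\leq -1$ and $0\leq k\leq k_0$, within the first block split the inner sum into $k<j\leq -1$ and $j\geq 0$, and in each piece use $\varepsilon_4>0$ together with Hölder's inequality in the discrete variable to sum the series and recover $\|f\|_{M\dot{K}_{q_1(\cdot),\lambda}^{\alpha(\cdot),p),\theta}(v,w)}$; the second block $H_2$ is handled as $H_{1,1}$, and the absence of the $(k-j)^m$ factor only lightens the summation. The one delicate point is the consistent propagation of the shift $\beta\mapsto\beta+m\sigma$ through every exponent so that the final decay constant is exactly $\varepsilon_4$; once the boundedness of $I_{\beta+m\sigma}$ is invoked in the form of \eqref{lw-1}, as in the preceding theorem, the entire summation machinery of Theorem \ref{bw-T2} transfers verbatim.
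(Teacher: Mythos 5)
Your proposal matches the paper's own proof essentially step for step: the same density reduction and dyadic decomposition, the same Lipschitz bound $|b(x)-b(t)|^m\lesssim\|b\|^m_{{\rm Lip}^\sigma}2^{jm\sigma}$ replacing Lemma \ref{L10}, the same use of Lemma \ref{tL2}, the kernel estimate \eqref{ow-18}, the fractional-integral trick $\chi_{B_k}\lesssim 2^{-k(\beta+m\sigma)}I_{\beta+m\sigma}(\chi_{B_k})$ with Lemmas \ref{tt-L3}--\ref{tt-L5} to produce the decay $2^{\varepsilon_4(k-j)}$ with $\varepsilon_4=w^-\alpha^--\beta-m\sigma+n\delta_1>0$, and the same final summation transferred from the $H_1$, $H_2$ estimates of Theorem \ref{bw-T2}. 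No gaps beyond those already present in the paper's argument.
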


\begin{proof}
 Since bounded functions with compact support are dense in $M\dot{K}_{q_2(\cdot),\lambda}^{\alpha (\cdot),p),\theta}(v,w)$, we only consider that $f$ is a bounded functions with compact support and write
 \[f=\sum^\infty_{j=-\infty}f \chi_j=:\sum^\infty_{j=-\infty}f_j.\]
By H\"{o}lder's inequality, Lemma \ref{tL2}, we have
\begin{align*}
&|v(B_k)^{\alpha(x)/n} \mathcal{H}^{\ast,b,m}_{\Omega,\beta} f(x) \chi_k(x) |\\
&\leq  v(B_k)^{\alpha(x)/n}  \int_{|t| \geq |x|} \frac{|b(x)-b(t)|^m |\Omega(x-t) f(t)|}{|t|^{n-\beta}} {\rm d} t \chi_k(x)\\
&\leq  v(B_k)^{\alpha(x)/n}  \int_{\rn \setminus B_k} \frac{|b(x)-b(t)|^m |\Omega(x-t) f(t)|}{|t|^{n-\beta}} {\rm d} t \chi_k(x)\\
&\leq  v(B_k)^{\alpha(x)/n} \sum_{j=k+1}^\infty \int_{D_j} \frac{|b(x)-b(t)|^m |\Omega(x-t) f(t)|}{|t|^{n-\beta}} {\rm d} t \chi_k(x)\\
& \leq 2^{(k-j)w^-\alpha^-} 2^{j(\beta+m\sigma -n)}  \|b\|^m_{{\rm Lip}^\sigma} \sum_{j=k+1}^\infty \int_{D_j} |v(B_j)^{\alpha(t)/n} \Omega(x-t) f(t)| {\rm d} t \chi_k(x)\\
& \leq 2^{(k-j)w^-\alpha^-} 2^{j(\beta+m\sigma -n)} \|b\|^m_{{\rm Lip}^\sigma} \sum_{j=k+1}^\infty \|v(B_j)^{\alpha(\cdot)/n} f_j\|_{L^{q_1(\cdot)}(w)} \|\Omega(x-\cdot)\chi_j\|_{L^{q'_1(\cdot)}(w^{-1})}  \chi_k(x)
\end{align*}
Taking the $L^{q_2(\cdot)}(w)$-norm on both sides of the above inequality, we have
\begin{align*}
\|v(B_k)^{\alpha(\cdot)/n} \mathcal{H}^{\ast,b,m}_{\Omega,\beta} (f) \chi_k \|_{L^{q_2(\cdot)}(w)} 
& \les 2^{(k-j)w^-\alpha^-} 2^{j(\beta+m\sigma -n)} \|b\|^m_{{\rm Lip}^\sigma}  \sum_{j=-\infty}^k  \|\chi_k\|_{L^{q_2(\cdot)}(w)}  \\
&\quad  \times \|v(B_j)^{\alpha(\cdot)/n} f_j\|_{L^{q_1(\cdot)}(w)} \|\Omega(x-\cdot)\chi_j\|_{L^{q'_1(\cdot)}(w^{-1})}  .
\end{align*}
Note that $\|\chi_k\|_{L^{q_2(\cdot)}(w)} \leq \|\chi_{B_k}\|_{L^{q_2(\cdot)}(w)}$ and $\chi_{B_k} \les 2^{-k(\beta+m\sigma )} I_{\beta+m\sigma }(\chi_{B_k})$.  Thus, by Lemma \ref{tt-L3}, we have
\begin{align}\label{lw-4}
\|\chi_k\|_{L^{q_2(\cdot)}(w)}
& \les 2^{-k(\beta+m\sigma )} \|I_{\beta+m\sigma }(\chi_{B_k})\|_{L^{q_2(\cdot)}(w)} \les 2^{-k(\beta+m\sigma )} \|\chi_{B_k}\|_{L^{q_1(\cdot)}(w)} \no \\
&\les 2^{-k(\beta+m\sigma )} \|\chi_k\|_{L^{q_1(\cdot)}(w)}  .
\end{align}
By (\ref{ow-18}) Lemma \ref{tt-L4}, Definition \ref{dg1}, Lemma \ref{tt-L5}  and (\ref{lw-4}), we have
\begin{align}\label{lw-5}
& 2^{(k-j)w^-\alpha^-} 2^{j(\beta+m\sigma -n)}  \|\Omega(x-\cdot)\chi_j\|_{L^{q'_1(\cdot)}(w^{-1})}   \|\chi_k\|_{L^{q_2(\cdot)}(w)} \no\\
& \quad \les 2^{(k-j)w^-\alpha^-} 2^{j(\beta+m\sigma -n)} 2^{jn/s} \|\chi_j\|_{L^{\gamma'(\cdot)}(w^{-1})} \|\chi_k\|_{L^{q_2(\cdot)}(w)} \no\\
& \quad \les 2^{(k-j)w^-\alpha^-} 2^{j(\beta+m\sigma -n)}  \|\chi_j\|_{L^{q'_1(\cdot)}(w^{-1})} \|\chi_k\|_{L^{q_2(\cdot)}(w)} \no\\
& \quad \les 2^{(k-j)w^-\alpha^-} 2^{j(\beta+m\sigma )}   \|\chi_j\|_{L^{q_1(\cdot)}(w)}^{-1} \|\chi_k\|_{L^{q_2(\cdot)}(w)} \no\\
& \quad \les 2^{(k-j)w^-\alpha^-} 2^{j(\beta+m\sigma )}  2^{n\delta_1(k-j)}  \|\chi_k\|_{L^{q_1(\cdot)}(w)}^{-1} \|\chi_k\|_{L^{q_2(\cdot)}(w)} \no\\
& \quad \les 2^{(k-j)w^-\alpha^-} 2^{j(\beta+m\sigma )}  2^{n\delta_1(k-j)} 2^{-k(\beta+m\sigma )} \|\chi_k\|_{L^{q_1(\cdot)}(w)}^{-1} \|\chi_k\|_{L^{q_1(\cdot)}(w)} \no\\
& \quad \les 2^{(k-j)\varepsilon_4},
\end{align}
where $\varepsilon_4=w^-\alpha^- -\beta-m\sigma +n\delta_1$.
Thus, by (\ref{lw-5}), we obtain
\begin{equation*}
\|v(B_k)^{\alpha(\cdot)/n} \mathcal{H}^{\ast,b,m}_{\Omega,\beta} (f) \chi_k \|_{L^{q_2(\cdot)}(w)} \les \|b\|^m_{{\rm Lip}^\sigma}  \sum_{j=k+1}^\infty \|v(B_j)^{\alpha(\cdot)/n} f_j\|_{L^{q_1(\cdot)}(w)} 2^{\varepsilon_4(k-j)} .
\end{equation*}
Therefore, by Lemma \ref{tL4}, we have
\begin{align*}
\|\mathcal{H}^{\ast,b,m}_{\Omega,\beta} f\|_{M\dot{K}_{q_2(\cdot),\lambda}^{\alpha (\cdot),p),\theta}(v,w)}
&  \les \sup_{\delta>0} \sup_{k_0 \in \mathbb{Z}} 2^{-k_0\lambda} \bigg( \delta^\theta \sum_{k=-\infty}^{-1} \|v(B_k)^{\alpha(\cdot)/n} \mathcal{H}^{\ast,b}_{\Omega,\beta} f \chi_k \|_{L^{q_2(\cdot)}(w)}^{p(1+\delta)} \bigg)^{\frac{1}{p(1+\delta)}} \\
&  \quad + \sup_{\delta>0} \sup_{k_0 \in \mathbb{Z}} 2^{-k_0\lambda} \bigg( \delta^\theta \sum_{k=0}^{k_0} \|v(B_k)^{\alpha(\cdot)/n} \mathcal{H}^{\ast,b}_{\Omega,\beta} f \chi_k \|_{L^{q_2(\cdot)}(w)}^{p(1+\delta)} \bigg)^{\frac{1}{p(1+\delta)}} \\
&=: H_3+H_4.
\end{align*}
Estimating $H_3$ and $H_4$ is the same as estimating $H_1$ and $H_2$ in Theorem \ref{bw-T2}, respectively. Notice that we used the fact that $\varepsilon_4>0$. Thus, we obtain
\[\|\mathcal{H}^{\ast,b,m}_{\Omega,\beta} f\|_{M\dot{K}_{q_2(\cdot),\lambda}^{\alpha (\cdot),p),\theta}(v,w)} \les \|b\|^m_{{\rm Lip}^\sigma} \| f\|_{M\dot{K}_{q_1(\cdot),\lambda}^{\alpha (\cdot),p),\theta}(v,w)}.\]
This completes the proof.
\end{proof}

\end{document}